\documentclass[12pt,a4paper]{amsart}
\usepackage{amsfonts, amssymb, amsmath, amsthm}
\usepackage{graphicx,latexsym,eufrak,mathrsfs}
\usepackage[hmargin=1.2in, vmargin=1in]{geometry}

\newtheorem{theorem}{Theorem}[section]
\newtheorem{proposition}[theorem]{Proposition}
\newtheorem{lemma}[theorem]{Lemma}
\newtheorem{corollary}[theorem]{Corollary}

\theoremstyle{definition}

\theoremstyle{remark}

\numberwithin{equation}{section}

\begin{document}

\title[Eigenfunction expansions in $\mathbb R^n$] {Eigenfunction expansions of ultradifferentiable functions and ultradistributions in $\mathbb R^n$}

\author[\DJ.Vu\v{c}kovi\'{c}]{\DJ or\dj e Vu\v{c}kovi\'{c}}
\address{Department of Mathematics\\ Ghent University\\ Krijgslaan 281 Gebouw S22, 9000 Gent, Belgium}
\email{dordev@cage.UGent.be}

\author[J. Vindas]{Jasson Vindas}
\address{Department of Mathematics, Ghent University, Krijgslaan 281 Gebouw S22, 9000 Gent, Belgium}
\email{jvindas@cage.UGent.be}
\thanks{The authors gratefully acknowledge support by Ghent University, through the BOF-grant 01N01014.}

\subjclass[2010]{Primary 35P10. Secondary 35B65, 35S05, 46F05.} 
\keywords{Shubin type differential operators, eigenfunction expansions, Gelfand-Shilov spaces, ultradifferentiable functions, ultradistributions, Denjoy-Carleman classes}

\dedicatory{Dedicated to the memory of Prof. Todor Gramchev}

\begin{abstract} We obtain a characterization of ${\mathcal S}^{\{M_p\}}_{\{M_p\}}(\mathbb R^n)$ and $\mathcal {S}^{(M_p)}_{(M_p)}(\mathbb {R}^n)$, the general Gelfand-Shilov spaces of ultradifferentiable functions of Roumieu and Beurling type, in terms of decay estimates for the Fourier coefficients of their elements with respect to eigenfunction expansions associated to normal globally elliptic differential operators of Shubin type. Moreover, we show that the eigenfunctions of such operators are absolute Schauder bases for these spaces of ultradifferentiable functions. Our characterization extends earlier results by Gramchev et al. (Proc. Amer. Math. Soc. 139 (2011), 4361--4368) for Gevrey weight sequences. It also generalizes to $\mathbb{R}^{n}$ recent results by Dasgupta and Ruzhansky which were obtained in the setting of compact manifolds.
 
\end{abstract} 

\maketitle

\section{Introduction}

Back in 1969 Seeley characterized \cite{See} real analytic functions on a compact analytic manifold via the decay of their Fourier coefficients with respect to eigenfunction expansions associated to a normal analytic elliptic differential operator. In recent times, this result by Seeley has attracted much attention and has been generalized in several directions. In a recent article \cite{dasgupta-ruzhansky2015}, Dasgupta and Ruzhansky extended Seeley's work and achieved the eigenfunction expansion characterization of Denjoy-Carleman classes of ultradifferentiable functions, of both Roumieu and Beurling type, and the corresponding ultradistribution spaces on a compact analytic manifold. See also \cite{dasgupta-ruzhansky2014} for Gevrey classes on compact Lie groups. 

Such results have also a global Euclidean counterpart. In this setting, it is natural to consider differential operators of Shubin type, that is, differential operators with polynomial coefficients
\begin{equation}
\label{1.3}P=\sum_{|\alpha|+|\beta|\leq m} c_{\alpha\beta} x^{\beta} D^{\alpha}, \quad D^{\alpha}=(-i\partial_x)^{\alpha}.
\end{equation}
In \cite{Pilip} Gramchev, Pilipovi\'{c}, and Rodino used this type of operators to give an analogue to Seeley's result for some classes of Gelfand-Shilov spaces.

The aim of this paper is to extend the results from \cite{Pilip} by supplying a characterization of the general Gelfand-Shilov spaces ${\mathcal S}^{\{M_p\}}(\mathbb R^n)={\mathcal S}^{\{M_p\}}_{\{M_p\}}(\mathbb R^n)$  and ${\mathcal S}^{(M_p)}(\mathbb R^n)=\mathcal {S}^{(M_p)}_{(M_p)}(\mathbb {R}^n)$ of ultradifferentiable functions of Roumieu and Beurling type \cite{PilipovicK,chung-chung-kim1996,Friedmanbook,GS,Langen}. Our characterization is as follows. We refer to Section \ref{preliminaries} for the notation. Note that if $P$ is globally elliptic and normal ($PP^{\ast}=P^{\ast}P$), then there is an orthonormal basis of $L^{2}(\mathbb{R}^{n})$ consisting of eigenfunctions of $P$. Properties of the Shubin type operators are very well explained in the textbooks \cite{Rodino,Shubin}. Our assumptions on the weight sequence are the standard $(M.1)$ and $(M.2)'$ Komatsu's conditions (logarithmic convexity and stability under differential operators \cite{Komatsu}), together with the essential assumption:
\begin{align}
\label{assumption}
\sqrt{p!}\leq C_{l} l^{p}M_{p}, \quad \forall p\in\mathbb{N}_{0} \quad &(\mbox{Roumieu case: for some }  l,C_{l}>0)
\\&
\nonumber
(\mbox{Beurling case: for all } l>0 \mbox{ there is } C_{l}>0).
\end{align}
The function $M$ below stands for the associated function of the weight sequence (cf. Section \ref{preliminaries}).
\begin{theorem}\label{mainintro}
Let $P$ be a normal globally elliptic differential operator of Shubin type $(\ref{1.3})$ and let  $\{u_j:j\in\mathbb N\}$ be an orthonormal basis of $L^{2}(\mathbb{R}^{n})$ consisting of eigenfunctions of $P$. Let $f\in L^{2}(\mathbb{R}^{n})$ have eigenfunction expansion
$$
f=\sum_{j=1}^{\infty} a_j u_j.
$$
Suppose that the weight sequence $M_p$ satisfies $(M.1)$, $(M.2)'$, and $(\ref{assumption})$.
Then,
\begin{enumerate}
\item [$(i)$] $f\in{\mathcal S}^{\{M_p\}}(\mathbb R^n)$ if and only if there are $\lambda>0$ and $C_\lambda>0$ such that
\begin{equation}
\label{vazno1}
|a_j|\leq C_\lambda e^{-M( \lambda j^{\frac{1}{2n}})}, \quad j\in\mathbb{N}.
\end{equation}
\item [$(ii)$] $f\in{\mathcal S}^{(M_p)}(\mathbb R^n)$ if and only if the estimate $(\ref{vazno1})$ holds for each $\lambda>0$.
\end{enumerate}
\end{theorem}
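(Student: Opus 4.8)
The plan is to factor each equivalence in Theorem~\ref{mainintro} through an intermediate condition on the iterates of the positive, self‑adjoint, globally elliptic Shubin \emph{differential} operator $Q:=I+P^{\ast}P$ of order $2m$. Since $P$ is normal, $Pu_{j}=\lambda_{j}u_{j}$ forces $P^{\ast}u_{j}=\overline{\lambda_{j}}u_{j}$, whence $Q^{k}u_{j}=(1+|\lambda_{j}|^{2})^{k}u_{j}$ and, by Parseval, $\|Q^{k}f\|_{L^{2}}^{2}=\sum_{j}|a_{j}|^{2}(1+|\lambda_{j}|^{2})^{2k}$. I will establish
\[
f\in{\mathcal S}^{\{M_{p}\}}(\mathbb R^{n})\ \overset{(\mathrm A)}{\Longleftrightarrow}\ \|Q^{k}f\|_{L^{2}}\le Ch^{k}M_{2mk}\ \text{ for all }k\ \overset{(\mathrm B)}{\Longleftrightarrow}\ \eqref{vazno1},
\]
reading ``$\exists h>0$'' and ``$\exists\lambda>0$'' for part $(i)$, and ``$\forall h>0$'', ``$\forall\lambda>0$'' for part $(ii)$, all arguments below being uniform in these parameters. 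Everything rests on the Weyl asymptotics for normal globally elliptic Shubin operators of order $m$ (see \cite{Shubin,Rodino}): the counting function of $\{|\lambda_{j}|\}$ is $\asymp t^{2n/m}$, so with $\kappa_{j}:=(1+|\lambda_{j}|^{2})^{1/2m}$ one has $\kappa_{j}\asymp j^{1/2n}$, and $M(\lambda j^{1/2n})$, $M(\lambda'\kappa_{j})$ agree up to a harmless rescaling of the parameter.

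Equivalence $(\mathrm B)$ is the spectral/analytic bookkeeping, and it is routine once one uses the identity $e^{M(t)}=\sup_{p}t^{p}/M_{p}$ and Komatsu's condition $(M.2)'$, which forces $M_{p}/M_{p-1}$ to grow at most geometrically. From $|a_{j}|\,\kappa_{j}^{2mk}=|\langle Q^{k}f,u_{j}\rangle|\le\|Q^{k}f\|_{L^{2}}$ one gets $|a_{j}|\le C\inf_{k}h^{k}M_{2mk}\kappa_{j}^{-2mk}$, and this infimum is comparable to $e^{-M(\varepsilon\kappa_{j})}$ for a suitable $\varepsilon$ (the passage from the subsequence $(M_{2mk})_{k}$ to $(M_{p})_{p}$ being made with $(M.1)$ and $(M.2)'$), which is \eqref{vazno1}. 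Conversely, from $|a_{j}|\le Ce^{-M(\lambda'\kappa_{j})}$ one writes, for $\rho<\lambda'$,
\begin{align*}
e^{-2M(\lambda'\kappa_{j})}\kappa_{j}^{4mk}
&=e^{-2(M(\lambda'\kappa_{j})-M(\rho\kappa_{j}))}\bigl(e^{-M(\rho\kappa_{j})}\kappa_{j}^{2mk}\bigr)^{2}\\
&\le e^{-2(M(\lambda'\kappa_{j})-M(\rho\kappa_{j}))}\,\rho^{-4mk}M_{2mk}^{2};
\end{align*}
since $(M.2)'$ and $\kappa_{j}\to\infty$ allow $\rho$ to be taken so small that $\sum_{j}e^{-2(M(\lambda'\kappa_{j})-M(\rho\kappa_{j}))}<\infty$, summation over $j$ yields $\|Q^{k}f\|_{L^{2}}\le C'h^{k}M_{2mk}$; the quantifiers on $h$ and $\lambda'$ track through this.

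The implication $(\mathrm A)$ ``$\Rightarrow$'' is the easy one. Writing $Q^{k}=\sum_{|\alpha|+|\beta|\le 2mk}c^{(k)}_{\alpha\beta}x^{\beta}D^{\alpha}$ and normal‑ordering $Q$ a total of $k$ times produces, besides geometrically many top‑order terms with geometrically bounded coefficients, lower‑order terms whose coefficients carry the factorial‑type factors familiar from Wick ordering; but a term of order $2mk-j$ is paired with $\|x^{\beta}D^{\alpha}f\|_{L^{2}}\le Ch_{0}^{|\alpha|+|\beta|}M_{|\alpha|+|\beta|}$, and, using $M_{a}M_{b}\le M_{a+b}$ from $(M.1)$ together with assumption \eqref{assumption} in the form $M_{2mk}\ge\sqrt{(2mk)!}/(C_{l}l^{2mk})$ — by Stirling this is $\gtrsim(k!)^{m}$ up to a geometric factor, exactly large enough to absorb those Wick factors — one arrives at $\|Q^{k}f\|_{L^{2}}\le Ch^{k}M_{2mk}$. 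This is the place where \eqref{assumption} is essential.

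The implication $(\mathrm A)$ ``$\Leftarrow$'' is the hard one, and I expect it to be the main obstacle. What is needed is the iterated global elliptic \emph{a priori} estimate for $Q$ with geometrically controlled constants,
\[
\|x^{\beta}D^{\alpha}f\|_{L^{2}}\le A^{|\alpha|+|\beta|}\sum_{0\le j\le k}\|Q^{j}f\|_{L^{2}}\qquad\text{whenever }2mk\ge|\alpha|+|\beta|,
\]
with $A$ independent of $\alpha,\beta,k,f$; equivalently, since $Q\ge I$ makes $B:=Q^{1/2m}$ an invertible globally elliptic pseudodifferential operator of Shubin type of order $1$, one needs $\|x^{\beta}D^{\alpha}B^{-p}\|_{L^{2}\to L^{2}}\le A^{p}$ for all $p$. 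Granting this, taking $k=\lceil(|\alpha|+|\beta|)/2m\rceil$ and inserting the middle estimate of the chain yields $\|x^{\beta}D^{\alpha}f\|_{L^{2}}\le(\mathrm{const})^{|\alpha|+|\beta|}M_{|\alpha|+|\beta|}$ (the last step by $(M.2)'$), i.e. $f\in{\mathcal S}^{\{M_{p}\}}(\mathbb R^{n})$; the Beurling case is identical with the quantifiers interchanged. The displayed estimate is the global ($\mathbb R^{n}$, Shubin) counterpart of the analytic--elliptic regularity bounds behind the classical Kotake--Narasimhan theorem: its principal part is harmless — the symbol of $x^{\beta}D^{\alpha}B^{-p}$ is bounded there by a geometric factor, since $|x^{\beta}\xi^{\alpha}|\le(1+|x|+|\xi|)^{|\alpha|+|\beta|}$ while that of $B^{-p}$ decays like $(1+|x|+|\xi|)^{-p}$ — so the real work is to run a symbolic calculus with analytic‑type seminorm estimates and show that the lower‑order corrections produced by the composition formula for $B^{-p}$ remain geometrically bounded in $p$; this is the technical heart of the argument. (For the single implication $\eqref{vazno1}\Rightarrow f\in{\mathcal S}^{\{M_{p}\}}(\mathbb R^{n})$ one may instead sum the series $f=\sum_{j}a_{j}u_{j}$: each $u_{j}$ lies in ${\mathcal S}^{1/2}_{1/2}(\mathbb R^{n})\subseteq{\mathcal S}^{\{M_{p}\}}(\mathbb R^{n})$ — using \eqref{assumption} again — and the same elliptic estimates applied to $Qu_{j}=(1+|\lambda_{j}|^{2})u_{j}$ bound its Gelfand--Shilov seminorms by a power of $\kappa_{j}$ times $e^{M(\varepsilon\kappa_{j})}$ with $\varepsilon$ as small as desired, so that the decay in \eqref{vazno1} forces convergence of the series in ${\mathcal S}^{\{M_{p}\}}(\mathbb R^{n})$; cf. \cite{Pilip}.)
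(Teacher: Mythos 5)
Your architecture is essentially the paper's: both proofs factor the equivalence through growth conditions on the iterates of a globally elliptic Shubin operator (the paper iterates $P$ itself and proves the tame identity $\mathcal{S}^{\ast}_{P}(\mathbb{R}^{n})=\mathcal{S}^{\ast}(\mathbb{R}^{n})$ in Theorem \ref{iterates theorem 1}; you iterate $Q=I+P^{\ast}P$, a harmless variant), and both handle the spectral side by combining Parseval with the Weyl asymptotics \eqref{tilda} and the identity $e^{M(t)}=\sup_{p}t^{p}/M_{p}$, using $(M.2)'$ to pass between the full associated function and the one built from the subsequence $M_{2mk}$. Your step $(\mathrm B)$ and the forward half of $(\mathrm A)$ are sound and correspond to the proof of Theorem \ref{eigenexpansions theorem 1} and to Proposition \ref{iterates proposition 1}, respectively.

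The gap is in the backward half of $(\mathrm A)$, which you correctly identify as the technical heart but then reduce to an a priori estimate that is false as stated. The inequality
\begin{equation*}
\|x^{\beta}D^{\alpha}f\|_{L^{2}}\le A^{|\alpha|+|\beta|}\sum_{0\le j\le k}\|Q^{j}f\|_{L^{2}},\qquad 2mk\ge|\alpha|+|\beta|,
\end{equation*}
equivalently $\|x^{\beta}D^{\alpha}B^{-p}\|_{L^{2}\to L^{2}}\le A^{p}$, fails already for $P=-\Delta+x^{2}$ on $\mathbb{R}$ and $f=u_{0}$ the Gaussian ground state: the right-hand side is $O(kA^{2mk})$, while $\|x^{\beta}u_{0}\|_{L^{2}}^{2}=\pi^{-1/2}\Gamma(\beta+\tfrac12)$, so $\|x^{\beta}u_{0}\|_{L^{2}}\asymp(\beta!)^{1/2}\beta^{-1/4}$ grows super-geometrically. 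The true estimate necessarily carries the factor $(\alpha!\beta!)^{1/2}$ (compare Corollary \ref{iterates corollary 1}), and it is precisely in absorbing this factor into $M_{|\alpha|+|\beta|}$ that assumption \eqref{assumption} must be invoked a \emph{second} time, in the backward direction --- not only in the forward one, as your write-up suggests. This is exactly how the paper proceeds: the commutator recursion of Lemma \ref{iterates lemma 1} has a remainder $C_{3}^{p}((p+1)m)!^{1/2}|f|_{0}$, and the interpolation inequality \eqref{interpolatingeq} has a remainder $C^{pm+j}((pm+j)!)^{1/2}\|f\|_{L^{2}}$; both $\sqrt{s!}$ terms are tamed by \eqref{assumption} through the derived condition \eqref{romije}, and the resulting recursion is then iterated in Theorem \ref{iterates theorem 1}. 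Your parenthetical fallback --- summing $f=\sum_{j}a_{j}u_{j}$ termwise using seminorm bounds on the eigenfunctions, as in \cite{Pilip} --- is viable, but it requires exactly the $(\alpha!\beta!)^{1/2}$-corrected eigenfunction bounds and hence the same elliptic machinery you have not supplied. As it stands, the hard half of the theorem is asserted rather than proved, and the one estimate you propose to prove it with cannot hold.
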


Consequently, the global $M_p$ regularity and decay of a function $f$ are completely determined by the decay of its coefficients $a_j$. 
Since for Gevrey sequences $M_p=(p!)^{\mu}$ the associated function $M(t)\asymp |t|^{1/\mu}$ \cite{GS}, our result includes as particular instances those from \cite{Pilip}. In the special case of the harmonic oscillator
$$
-\Delta+|x|^2,
$$
the eigenfunctions are given by the Hermite functions; Theorem \ref{mainintro} thus also recovers well-known results for Hermite expansions \cite{PilipovicK,Langen,zhang1963} (see also \cite{KO2015}).

It is important to point out that Theorem \ref{mainintro} does not reveal all topological information involved in the problem. In fact, in Section \ref{section eigenexpansions ultradifferentiable functions} we prove a much stronger result, namely, we shall show that the eigenfunctions $u_j$ are absolute Schauder bases for  ${\mathcal {S}}^{\ast}(\mathbb R^n)$, where $\ast=\{M_p\}$ or $(M_p)$, and that these spaces become (tamely) isomorphic as topological vector spaces to sequence spaces canonically defined by the estimates (\ref{vazno1}). 
This will easily yield an eigenfunction expansion characterization of the ultradistribution spaces ${\mathcal{S}^{\ast}}' (\mathbb R^n)$. In Section \ref{section iterates} we characterize $\mathcal{S}^{\ast}(\mathbb{R}^{n})$ via iterates of $P$; the characterization leads to the ensuing regularity result for solutions to the equation $Pu=f$.
\begin{theorem}
\label{regularity result}
Let $P$ be a globally elliptic operator of Shubin type $(\ref{1.3})$ and let $M_p$ satisfy $(M.1)$, $(M.2)'$, and $(\ref{assumption})$. If $u\in{\mathcal{S}^{\ast}}'(\mathbb R^n)$ is a solution to $Pu=f$ and $f\in{\mathcal{S}^{\ast}}(\mathbb R^n)$, then also $u\in{\mathcal{S}^{\ast}}(\mathbb R^n)$.
\end{theorem}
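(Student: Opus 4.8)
The plan is to derive the statement from the characterization of $\mathcal{S}^{\ast}(\mathbb R^{n})$ by iterates of $P$ proved in Section \ref{section iterates}. Recall that, with $m$ the order of $P$, that characterization provides the following: a distribution $g\in{\mathcal{S}^{\ast}}'(\mathbb R^{n})$ belongs to $\mathcal{S}^{\ast}(\mathbb R^{n})$ if and only if $P^{k}g\in L^{2}(\mathbb R^{n})$ for every $k\in\mathbb N_{0}$ and there is $C>0$ such that $\|P^{k}g\|_{L^{2}(\mathbb R^{n})}\le C h^{k}M_{mk}$ for all $k$, where $h$ may be taken to be one fixed positive constant in the Roumieu case and is arbitrary in the Beurling case. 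Accordingly, I would verify these two conditions for the given solution $u$.

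First I would check that $P^{k}u\in L^{2}(\mathbb R^{n})$ for every $k$. For $k\ge 1$ this is free: from $Pu=f$ we get $P^{k}u=P^{k-1}(Pu)=P^{k-1}f$, and since $f\in\mathcal{S}^{\ast}(\mathbb R^{n})\subset\mathcal{S}(\mathbb R^{n})$ and $P$ maps $\mathcal{S}(\mathbb R^{n})$ into itself, $P^{k-1}f\in\mathcal{S}(\mathbb R^{n})\subset L^{2}(\mathbb R^{n})$. The case $k=0$, i.e.\ $u\in L^{2}(\mathbb R^{n})$, is the only point that needs genuine input, and here I would invoke the classical global hypoellipticity of globally elliptic Shubin operators (see \cite{Shubin,Rodino}): since $u\in{\mathcal{S}^{\ast}}'(\mathbb R^{n})\subset\mathcal{S}'(\mathbb R^{n})$ and $Pu=f\in\mathcal{S}(\mathbb R^{n})$, global ellipticity of $P$ upgrades $u$ to $\mathcal{S}(\mathbb R^{n})$; concretely, $u$ lies in some Shubin--Sobolev space, and applying a parametrix $E$ of order $-m$ with $EP=I+R$, $R$ regularizing, gives $u=Ef-Ru\in\mathcal{S}(\mathbb R^{n})$, in particular $u\in L^{2}(\mathbb R^{n})$.

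The remaining step is a bookkeeping transfer of the iterate bounds from $f$ to $u$. Applying the characterization to $f\in\mathcal{S}^{\ast}(\mathbb R^{n})$ yields $C_{1}>0$ (with the relevant quantification on $h$) such that $\|P^{k}f\|_{L^{2}}\le C_{1}h^{k}M_{mk}$ for all $k\in\mathbb N_{0}$. Hence, for $k\ge 1$,
\[
\|P^{k}u\|_{L^{2}(\mathbb R^{n})}=\|P^{k-1}f\|_{L^{2}(\mathbb R^{n})}\le C_{1}h^{k-1}M_{m(k-1)}\le C_{1}h^{k-1}M_{mk},
\]
where the last inequality uses that $p\mapsto M_{p}$ is non-decreasing, a consequence of $(M.1)$ together with the normalization of the sequence (at worst, finitely many initial terms are absorbed into the constant). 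Taking $C:=\max\{C_{1},\|u\|_{L^{2}}\}$ and $h':=\max\{h,1\}$, we obtain $\|P^{k}u\|_{L^{2}}\le C(h')^{k}M_{mk}$ for all $k\in\mathbb N_{0}$, with $h'$ still arbitrary in the Beurling case. By the sufficiency direction of the iterates characterization this gives $u\in\mathcal{S}^{\ast}(\mathbb R^{n})$, as desired.

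I expect the only real obstacle to be the a priori step $u\in L^{2}(\mathbb R^{n})$ of the second paragraph: this is precisely where global ellipticity of $P$ is used (and is the reason normality, essential in Theorem \ref{mainintro}, is not needed here), and it rests entirely on the standard Shubin calculus rather than on the ultradifferentiable structure; everything after it is the short manipulation above, driven by the identities $P^{k}u=P^{k-1}f$ and the elementary monotonicity $M_{m(k-1)}\le M_{mk}$. When $P$ is moreover normal one can bypass the elliptic-regularity input and argue directly from Theorem \ref{mainintro}: writing $u=\sum_{j}b_{j}u_{j}$ and $f=\sum_{j}a_{j}u_{j}$, the equation $Pu=f$ forces $\lambda_{j}b_{j}=a_{j}$, so $b_{j}=a_{j}/\lambda_{j}$ for the cofinitely many indices with $\lambda_{j}\neq 0$ — which propagates the decay $(\ref{vazno1})$ from the $a_{j}$ to the $b_{j}$ — while the remaining finitely many terms already belong to $\mathcal{S}^{\ast}(\mathbb R^{n})$.
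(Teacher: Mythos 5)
Your proof is correct and follows essentially the same route as the paper: the paper also deduces the theorem from Theorem \ref{iterates theorem 1} by first invoking the standard global regularity result for globally elliptic Shubin operators (citing \cite{Rodino}) to get $u\in\mathcal{S}(\mathbb{R}^{n})$, and then observing that $\|u\|_{P,h}$ is controlled by $\max\{\|u\|_{L^{2}(\mathbb{R}^{n})},\|f\|_{P,h}\}$ via the identity $P^{p}u=P^{p-1}f$. Your bookkeeping with the monotonicity of $M_{p}$ is, if anything, slightly more careful than the paper's one-line version.
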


\section{Preliminaries}
\label{preliminaries}
Throughout the article we shall fix the differential operator $P$ of Shubin type having order $m$, explicitly given by (\ref{1.3}). Additional assumptions on $P$ will be imposed when needed. We also fix a positive weight sequence $M_p$ with $M_0=1$. Besides (\ref{assumption}), we make use of the following conditions on $M_p$:
\begin{itemize}
\item [$(M.1)\:$] $M^{2}_{p}\leq M_{p-1}M_{p+1},$  $p\geq 1$.
\item [$(M.2)'$]$M_{p+1}\leq A H^p M_p$, $p\in\mathbb{N}_0$, for some $A>0$ and $H\geq 1$.
\item [$(M.2)\:$] $ \displaystyle M_{p}\leq A H^p\min_{1\leq q\leq p} \{M_{q} M_{p-q}\},$ $p\in\mathbb{N}$, for some $A>0$ and $H\geq 1$.
\end{itemize}
Note that the Gevrey sequences $M_p=(p!)^{\mu}$ satisfy all of the above properties, if $\nu\geq 1/2$ in the Roumieu case of (\ref{assumption}) and $\mu>1/2$ in the Beurling case of (\ref{assumption}).  The associated function of $M_p$ is
 $$
M(t):=\sup_{p\in\mathbb{N}_0}\log\frac{t^p}{M_p},\quad t>0.
$$
We refer to \cite{Komatsu} for the meaning of $(M.1)$, $(M.2)'$, and $(M.2)$, and their translation into properties of the associated function. 

We now derive a simple but very useful relation for sequences fulfilling $(M.1)$ and (\ref{assumption}). This relation plays a crucial role in Section \ref{section iterates}. Observe also that (\ref{romije}) obviously implies (\ref{assumption}).

\begin{lemma} The conditions $(M.1)$ and $(\ref{assumption})$ imply that 
\begin{align}
\label{romije}&\sqrt{p+1}\: \frac{M_p}{M_{p+1}}\leq r, \ \ p\in\mathbb{N}_{0}, \ \ \mbox{for some }r>0 \quad &(\mbox{Roumieu case}),
\\
&\nonumber
\lim_{p\to\infty}\sqrt{p+1} \: \frac{M_p}{M_{p+1}}=0.
&(\mbox{Beurling case})
\end{align}
\end{lemma}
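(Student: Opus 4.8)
The plan is to use $(M.1)$ to bound the ratio $M_{p+1}/M_p$ from below by a power of $M_{p+1}$ itself, and then to plug in the lower bound on $M_{p+1}$ provided by $(\ref{assumption})$ together with Stirling's formula. Set $m_p:=M_p/M_{p-1}$ for $p\geq 1$; since $M_0=1$, one has $M_p=m_1 m_2\cdots m_p$. Condition $(M.1)$ is equivalent to saying that the sequence $(m_p)_{p\geq 1}$ is nondecreasing, so each factor $m_q$ with $1\leq q\leq p+1$ satisfies $m_q\leq m_{p+1}$, whence $M_{p+1}=\prod_{q=1}^{p+1}m_q\leq m_{p+1}^{\,p+1}$, that is,
\[
m_{p+1}\geq M_{p+1}^{1/(p+1)}.
\]

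Next I would invoke $(\ref{assumption})$ with $p$ replaced by $p+1$, written in the form $M_{p+1}\geq \sqrt{(p+1)!}\,/(C_l\, l^{p+1})$, and combine it with the inequality above to obtain
\[
m_{p+1}\geq \frac{\big((p+1)!\big)^{1/(2(p+1))}}{C_l^{1/(p+1)}\, l}.
\]
By Stirling's formula $\big((p+1)!\big)^{1/(2(p+1))}=(1+o(1))\sqrt{(p+1)/e}$ as $p\to\infty$, while $C_l^{1/(p+1)}\to 1$; hence
\[
\sqrt{p+1}\,\frac{M_p}{M_{p+1}}=\frac{\sqrt{p+1}}{m_{p+1}}\leq (1+o(1))\,l\sqrt{e},\qquad p\to\infty.
\]

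In the Roumieu case $l$ denotes the fixed constant appearing in $(\ref{assumption})$, so $\limsup_{p\to\infty}\sqrt{p+1}\,M_p/M_{p+1}\leq l\sqrt{e}<\infty$; since the remaining finitely many terms of the sequence are positive and finite, it is bounded by some $r>0$, which gives $(\ref{romije})$. In the Beurling case the last displayed estimate is available for every $l>0$, hence $\limsup_{p\to\infty}\sqrt{p+1}\,M_p/M_{p+1}\leq l\sqrt{e}$ for all $l>0$, which forces $\lim_{p\to\infty}\sqrt{p+1}\,M_p/M_{p+1}=0$. The only points requiring a little care are the geometric-mean step, which is exactly where $(M.1)$ is used, and checking that the Stirling correction factor and the factor $C_l^{1/(p+1)}$ both tend to $1$; I do not expect either to present a genuine obstacle.
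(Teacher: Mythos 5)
Your proof is correct and follows essentially the same route as the paper: the log-convexity $(M.1)$ gives the geometric-mean bound $M_{p+1}/M_p\geq M_{p+1}^{1/(p+1)}$ (the paper uses the equivalent $M_{p+1}/M_p\geq M_p^{1/p}$), which is then combined with the lower bound on $M_{p+1}$ from $(\ref{assumption})$ and Stirling's formula. The only differences are the harmless index shift and your explicit constant $\sqrt{e}$ in place of the paper's generic $C$.
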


\begin{proof}
Stirling's formula yields  $\sqrt{p+1}\leq C (\sqrt{p!})^{1/p}$. Using $(M.1)$, we conclude that $(M_p/M_{p+1})\leq M_{p}^{-1/p} $.
Thus, (\ref{assumption}) yields $\sqrt{p+1} M_p/M_{p+1}\leq CC_{l}^{1/p} l $.
\end{proof}
We define $\mathcal{S}^{\{M_p\}}(\mathbb{R}^{n})$ and $\mathcal{S}^{(M_p)}(\mathbb{R}^{n})$ as follows.
First introduce the Banach space ${\mathcal S}^{\{M_p\},h}_{L^{2}}$, $h>0$, consisting of all $f\in C^{\infty}(\mathbb{R}^{n})$ such that
\begin{equation}
\label{ultranorms}
\|f\|_{h}:=\sup_{\alpha,\beta\in\mathbb{N}^{n}_0}\frac{\|x^{\beta}\partial^{\alpha}f\|_{L^{2}(\mathbb{R}^{n})}}{h^{|\alpha|+|\beta|}M_{|\alpha|+|\beta|}}<\infty\ ;
\end{equation}
define then 
\begin{equation}
\label{eqspaces1}
{\mathcal{S}}^{\{M_p\}}(\mathbb R^n)=\bigcup_{h>0} {\mathcal S}^{\{M_p\},h}_{L^{2}} \quad \mbox{ and} \quad {\mathcal S}^{(M_p)}(\mathbb R^n)=\bigcap_{h>0} {\mathcal S}^{\{M_p\},h}_{L^{2}},
\end{equation}
the union and intersection having topological meaning as inductive and projective limits of Banach spaces. Under the assumption $(M.2)'$, these spaces are (DFS) and (FS) spaces, respectively. We use the notation $\ast=\{M_p\},(M_p)$ to treat the Roumieu and Beurling case simultaneously. As customary, one writes $\mathcal{S}^{\mu}_{\mu}(\mathbb{R}^{n})=\mathcal{S}^{\{M_p\}}(\mathbb{R}^{n})$ and $\Sigma^{\mu}_{\mu}(\mathbb{R}^{n})=\mathcal{S}^{(M_p)}(\mathbb{R}^{n})$ for the special case $M_{p}=(p!)^{\mu}$. Condition (\ref{assumption}) yields $\mathcal{S}^{1/2}_{1/2}(\mathbb{R}^{n})\subseteq\mathcal{S}^{\ast}(\mathbb{R}^{n})$, which ensures the non-triviality of these spaces. Naturally $P:\mathcal{S}^{\ast}(\mathbb{R}^{n})\to \mathcal{S}^{\ast}(\mathbb{R}^{n})$ becomes continuous if one assumes $(M.2)'$ and hence one can define $P$ on the ultradistribution space ${\mathcal{S}^{\ast}}'(\mathbb{R}^{n})$ via duality.

For the reader's convenience, we recall the definition of \emph{tame continuity} of linear mappings for graded Fr\'{e}chet spaces and inductive limits of Banach spaces. This notion is very important in the structure theory of Fr\'{e}chet spaces (see e.g. \cite{Vogt1987}). A graded Fr\'{e}chet space is a Fr\'{e}chet space together with a choice of a non-decreasing sequence of seminorms defining its topology. A continuous linear mapping $T:(E,|\ |_{j})\to (F,|\ |'_{j})$ between two graded Fr\'{e}chet spaces is called (linearly) tame if there are constants $L>0$ and $j_0$ such that $|T v|'_{Lj}\leq C_{j}|v|_{j}$, for all $j\geq j_0$ and $v\in E$. Tame continuity for (LB) spaces is defined similarly. Once one implicitly fixes the increasing sequences of Banach spaces, a mapping $T:E=\varinjlim_{j}E_{j}\to F=\varinjlim_{j} F_{j} $ is tamely continuous if there are $L$ and $j_0$  such that $\|T v\|_{F_{Lj}}\leq C_{j}\|v\|_{E_{j}}$, for all $j\geq j_0$ and $v\in E_{j}$. The meaning of a tame isomorphism is clear. 

In the next sections we always consider the grading of $\mathcal{S}^{\ast}(\mathbb{R}^{n})$ given by (\ref{eqspaces1}), that is, the one provided by the Banach spaces ${\mathcal S}^{\{M_p\},h}_{L^{2}}$. It is worth noticing that if  $(M.2)'$ holds, using the norms $\| \ \|_{L^{2}(\mathbb{R}^{n})}$ instead of $\| \ \|_{L^{\infty}(\mathbb{R}^{n})}$ in $(\ref{ultranorms})$ leads to an equivalent definition of $\mathcal{S}^{\ast}(\mathbb{R}^{n})$. Furthermore, that modified system of norms is tamely equivalent to (\ref{ultranorms}), as one easily verifies. 
We also remark that our definition of the norms (\ref{ultranorms}) does not separate between the behavior of derivatives and growth. On the other hand, if the sequence satisfies $(M.2)$, such behavior can be split and our system of norms becomes tamely equivalent to $\sup_{\alpha,\beta\in\mathbb{N}^{n}_0}\|x^{\beta}\partial^{\alpha}f\|_{L^{2}(\mathbb{R}^{n})}/(h^{|\alpha|+|\beta|}M_{|\alpha|}M_{|\beta|})$. However, $(M.2)$ plays basically no role in our arguments, we shall  therefore not impose it and we choose to use the family of norms (\ref{ultranorms}).

\section{Iterates of the operator and regularity of solutions}
\label{section iterates}
In this section we exploit the iterative approach from \cite{capiello-gramchev-rodino, Pilip, See} in order to obtain a structural characterization of $\mathcal{S}^{\ast}(\mathbb{R}^{n})$ in terms of the growth of the $L^2$ norms of the iterates of the operator $P$. The regularity result Theorem \ref{regularity result} will readily follow from Theorem \ref{iterates theorem 1} below. We point out that these ideas go back to seminal works by Komatsu \cite{komatsu1960,komatsu1962} and Kotak\'{e} and Narasimhan \cite{KN}. 

We begin by introducing function spaces associated to the iterates of $P$. At this point, we do not need any ellipticity assumption on $P$. For $h>0$, define the Banach space $\mathcal{S}^{\{M_p\},h}_{P}$ of all functions $f$ such $P^{p}f\in L^{2}(\mathbb{R}^{n})$ for all $p\in\mathbb{N}_0$ and
\begin{equation}
\label{normsP}
\|f\|_{P,h}:= \sup_{p\in\mathbb{N}_{0}} \frac{\|P^{p}f\|_{L^{2}(\mathbb{R}^{n})}}{h^{mp}M_{mp}}<\infty;
\end{equation}
set further,
\begin{equation*}
{\mathcal{S}}^{\{M_p\}}_{P}(\mathbb R^n)=\varinjlim_{h\to\infty} {\mathcal S}^{\{M_p\},h}_{P} \quad \mbox{ and } \quad {\mathcal S}^{(M_p)}_{P}(\mathbb R^n)=\varprojlim_{h\to 0^{+}} {\mathcal S}^{\{M_p\},h}_{P}.
\end{equation*}
We regard $\mathcal{S}^{\ast}_{P}(\mathbb{R}^{n})$ as spaces graded by the norms (\ref{normsP}). 
\begin{proposition} \label{iterates proposition 1} Suppose $M_p$ satisfies $(\ref{romije})$. Then, $\mathcal{S}^{\ast}(\mathbb{R}^{n})\subseteq \mathcal{S}^{\ast}_{P}(\mathbb{R}^{n})$ and the inclusion mapping $\mathcal{S}^{\ast}(\mathbb{R}^{n})\to \mathcal{S}^{\ast}_{P}(\mathbb{R}^{n})$ is tamely continuous.
\end{proposition}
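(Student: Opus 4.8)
The plan is to estimate the iterates $\|P^{p}f\|_{L^{2}}$, for $f\in\mathcal{S}^{\ast}(\mathbb{R}^{n})$, in terms of the seminorms $(\ref{ultranorms})$ with the \emph{exact} growth rate $h^{mp}M_{mp}$ in $p$, and then to read off the (tame) continuity from the descriptions $(\ref{eqspaces1})$ and $(\ref{normsP})$. First I would put each iterate into normal form: repeated use of the commutation rule
\[
D^{\alpha}x^{\beta}=\sum_{\mu\leq\min(\alpha,\beta)}\binom{\alpha}{\mu}\frac{\beta!}{(\beta-\mu)!}\,(-i)^{|\mu|}\,x^{\beta-\mu}D^{\alpha-\mu}
\]
shows, since $P$ has order $m$, that $P^{p}=\sum_{|\gamma|+|\delta|\leq mp}c^{(p)}_{\gamma\delta}\,x^{\gamma}D^{\delta}$. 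The technical core of the argument is the estimate
\[
\sum_{|\gamma|+|\delta|=q}\bigl|c^{(p)}_{\gamma\delta}\bigr|\;\leq\;C^{p}\sqrt{\frac{(mp)!}{q!}}\,,\qquad 0\leq q\leq mp,
\]
for some $C=C(P)\geq1$ (the polynomial number $\#\{(\gamma,\delta):|\gamma|+|\delta|=q\}\leq(mp+1)^{2n}$ being harmless and absorbed into $C$). One proves this by induction on $p$ from the recursion satisfied by the $c^{(p)}_{\gamma\delta}$: each elementary contraction of two consecutive factors $x^{\beta_{i}}D^{\alpha_{i}}$ lowers the order by twice the number $|\mu|$ of derivatives commuted through, so the polynomial coefficients $\binom{\cdot}{\mu}\tfrac{\cdot!}{(\cdot-\mu)!}$ produced by that commutation are matched — up to exponential‑in‑$p$ factors, which are absorbed into $C^{p}$ — by the change of $\sqrt{(mp)!/q!}$ as $q$ decreases. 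Keeping the exponent $\tfrac12$ here, which records that $P$ has order $m$ in the isotropic Shubin scale (where $x$ and $D$ count equally), is exactly what makes the summation below converge. I expect this inductive estimate, with its delicate bookkeeping of polynomial and factorial factors, to be the main obstacle; one could alternatively derive an equivalent growth bound for $\|P^{p}f\|_{L^{2}}$ from the mapping properties of Shubin operators on the Sobolev spaces attached to the harmonic oscillator $-\Delta+|x|^{2}$.

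Granting the coefficient estimate, the rest is assembly. Inserting $(\ref{ultranorms})$ into $P^{p}f=\sum c^{(p)}_{\gamma\delta}\,x^{\gamma}D^{\delta}f$ gives
\[
\|P^{p}f\|_{L^{2}}\;\leq\;\|f\|_{h}\sum_{q=0}^{mp}h^{q}M_{q}\!\!\sum_{|\gamma|+|\delta|=q}\!\!\bigl|c^{(p)}_{\gamma\delta}\bigr|\;\leq\;C^{p}\|f\|_{h}\sum_{q=0}^{mp}h^{q}M_{q}\sqrt{\frac{(mp)!}{q!}}\,,
\]
and this is where $(\ref{romije})$ is used decisively. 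In the Roumieu case it reads $M_{j+1}/M_{j}\geq\sqrt{j+1}/r$, hence $M_{q}\leq r^{\,mp-q}\sqrt{q!/(mp)!}\,M_{mp}$, the two factorials cancel, and the remaining sum is geometric:
\[
\|P^{p}f\|_{L^{2}}\;\leq\;C^{p}\|f\|_{h}\,M_{mp}\sum_{q=0}^{mp}h^{q}r^{\,mp-q}\;\leq\;2\,C^{p}\|f\|_{h}\,h^{mp}M_{mp}\qquad(h\geq 2r).
\]
Thus $\|f\|_{P,Lh}\leq\|f\|_{h}$ with $L=C^{1/m}$ fixed and all $h\geq 2r$; in particular $\mathcal{S}^{\{M_{p}\},h}_{L^{2}}\subseteq\mathcal{S}^{\{M_{p}\},Lh}_{P}$, and with respect to the gradings $(\ref{eqspaces1})$, $(\ref{normsP})$ this is exactly the tame continuity of the inclusion $\mathcal{S}^{\{M_{p}\}}(\mathbb{R}^{n})\to\mathcal{S}^{\{M_{p}\}}_{P}(\mathbb{R}^{n})$ (passing from the index $j$ to the fixed multiple $Lj$, where $h$ is much larger, takes care of the factor $C^{p}$).

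In the Beurling case one uses instead the sharper conclusion $\lim_{p}\sqrt{p+1}\,M_{p}/M_{p+1}=0$, which yields: for \emph{every} $\varepsilon>0$ there is $C_{\varepsilon}$ with $M_{q}\leq C_{\varepsilon}\,\varepsilon^{\,mp-q}\sqrt{q!/(mp)!}\,M_{mp}$ for all $0\leq q\leq mp$ and all $p$. Exactly as before, this gives $\|P^{p}f\|_{L^{2}}\leq C_{\varepsilon}\,C^{p}\|f\|_{h}\,h^{mp}M_{mp}$ whenever $\varepsilon<h$. Given a target $h'>0$, take $h=h'/L$ and $\varepsilon=h/2$ with $L=2\,C^{1/m}$ fixed; then $\varepsilon<h$ and
\[
\|f\|_{P,h'}=\sup_{p}\frac{\|P^{p}f\|_{L^{2}}}{(h')^{mp}M_{mp}}\;\leq\;C_{\varepsilon}\,\|f\|_{h'/L}\sum_{p\geq0}\Bigl(\frac{Ch^{m}}{(h')^{m}}\Bigr)^{p}=C_{\varepsilon}\,\|f\|_{h'/L}\sum_{p\geq0}2^{-mp}<\infty .
\]
Since the loss $h'\mapsto h'/L$ in the parameter is linear with a fixed ratio, this is the asserted tame continuity of $\mathcal{S}^{(M_{p})}(\mathbb{R}^{n})\to\mathcal{S}^{(M_{p})}_{P}(\mathbb{R}^{n})$. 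In particular, in both cases the set inclusion $\mathcal{S}^{\ast}(\mathbb{R}^{n})\subseteq\mathcal{S}^{\ast}_{P}(\mathbb{R}^{n})$ holds.
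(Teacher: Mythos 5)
Your strategy is the same as the paper's: write $P^{p}$ in normal-ordered form, show that the coefficients at order $q$ are controlled by $C^{p}\sqrt{(mp)!/q!}$, and then use $(\ref{romije})$ in the form $M_{q}\leq r^{mp-q}\sqrt{q!/(mp)!}\,M_{mp}$ (resp.\ with $r$ replaced by an arbitrarily small $\varepsilon$ for large indices) to cancel the factorials and sum a geometric series. The paper organizes the bookkeeping differently — it keeps the full history of the commutations as tuples $(\boldsymbol{\alpha},\boldsymbol{\beta},\boldsymbol{\tau})$ and bounds the product of factorial ratios at the end rather than inductively — but the final assembly in your write-up (the choice $h\geq 2r$ in the Roumieu case, the $\varepsilon$-dependent constant and fixed loss factor $L$ in the Beurling case, and the reading of tame continuity from the gradings) is correct and matches the paper's conclusion $\|\cdot\|_{P,Lh}\leq C_{h}\|\cdot\|_{h}$.

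The one genuine gap is that the coefficient estimate $\sum_{|\gamma|+|\delta|=q}|c^{(p)}_{\gamma\delta}|\leq C^{p}\sqrt{(mp)!/q!}$ — which, as you yourself note, carries the entire content of the proposition — is only asserted, with a one-sentence heuristic for the inductive step. That step is not as automatic as ``the polynomial coefficients are matched by the change of $\sqrt{(mp)!/q!}$'': if in passing from $P^{p}$ to $P^{p+1}$ you bound the commutator factor $\gamma!/(\gamma-\mu)!$ crudely by $(mp)^{|\mu|}$, the induction \emph{fails} for terms of low order (e.g.\ a term $x^{\gamma}$ with $|\gamma|=q'=2|\mu|$ small and $q=0$ forces $(mp)^{|\mu|}\leq C'\sqrt{(mp+m)!/(mp)!}\sqrt{q'!}\asymp (mp)^{m/2}$, which is false for $|\mu|$ close to $m$). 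One must instead use $\gamma!/(\gamma-\mu)!\leq |\gamma|^{|\mu|}\leq (q')^{|\mu|}$, i.e.\ exploit that the monomial being commuted through has order at most that of the term it sits in; this is exactly the role of the constraints $\tau_{1}+\dots+\tau_{j}\leq\beta_{1}+\dots+\beta_{j}$, equivalently $mp-2|\tau_{j+1}|-\dots-2|\tau_{p-1}|\geq jm$, in the paper's argument. With that refinement (and a short case distinction according to whether $q\geq q'$ or $q<q'$, the latter splitting further according to whether $q'$ is comparable to $q$ or bounded by $3m$) the induction does close, so your lemma is true — but as written the proof of the proposition's technical core is missing.
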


\begin{proof}

Fix $f\in\mathcal{S}^{^{\{M_p\},h}}_{L^{2}}$ with $\|f\|_{h}=1$.
By employing the Leibniz formula, it is easy to see that 

\begin{align} 
 P^p u=
\sum_{(\boldsymbol{\alpha},\boldsymbol{\beta},\boldsymbol{\tau})\in{\mathcal C}_{p}} q_{\boldsymbol{\alpha},\boldsymbol{\beta},\boldsymbol{\tau}}(P) Q_{\boldsymbol{\alpha},\boldsymbol{\beta},\boldsymbol{\tau}}L_{\boldsymbol{\alpha},\boldsymbol{\beta},\boldsymbol{\tau}}(f),
 \end{align}
where the summation extends over the set ${\mathcal C}_{p}$ of all $(3p-1)$-tuples of multi-indices $(\boldsymbol{\alpha},\boldsymbol{\beta},\boldsymbol{\tau})=(\alpha_1,\dots,\alpha_p,\beta_1,\dots,\beta_p,\tau_1,\dots,\tau_{p-1})$ such that
$|\alpha_j|+|\beta_j|\leq m$ for each $j$,  $\tau_{j-1}\leq \alpha_{j}$ for $j=2,\dots, p$, and 
$
\tau_1+\dots +\tau_j\leq \beta_1+\dots+\beta_j$ for $j=1,2,\dots, {p-1},
$
and where
$$
q_{\boldsymbol{\alpha},\boldsymbol{\beta},\boldsymbol{\tau}}(P):= c_{\alpha_1,\beta_1} \prod_{j=2}^p  c_{\alpha_j,\beta_j}{{\alpha_j}\choose{\tau_{j-1}}} \:,
$$

$$
 Q_{\boldsymbol{\alpha},\boldsymbol{\beta},\boldsymbol{\tau}}:=\prod_{j=1}^{p-1} \frac{(\beta_{1}+\dots +\beta_{j}-\tau_1-\dots-\tau_{j-1} )!}{(\beta_{1}+\dots +\beta_{j}-\tau_1-\dots-\tau_{j-1}-\tau_{j})!}  \quad (\tau_{0}:=0),
$$
and the differential operator $L_{\boldsymbol{\alpha},\boldsymbol{\beta},\boldsymbol{\tau}}$ is given by
$$
L_{\boldsymbol{\alpha},\boldsymbol{\beta},\boldsymbol{\tau}}:=x^{\beta_{1}+\dots+\beta_{p}-\tau_{1}-\dots-\tau_{p-1}}  D^{\alpha_1+\dots+\alpha_p-\tau_1-\tau_2-\cdots-\tau_{p-1}}.
$$

Set $C_P=\max_{|\alpha|+|\beta|\leq m}\{|c_{\alpha,\beta}|\}$. First note that $|q_{\boldsymbol{\alpha},\boldsymbol{\beta},\boldsymbol{\tau}}(P)|\leq 2^{-m}(C_{P}2^{m})^{p}$, because of the well known estimate for binomial coefficients. We need an estimate on the number of elements of the set $\mathcal{C}_{p}$. The rough bound $|\mathcal{C}_{p}|\leq m^{-n} (2^{m+2n} m^{n})^{p}$ suffices for our purposes. Indeed, for a fixed $j$, the number of multi-indices such that $|\alpha_j|+|\beta_j|\leq m$ is $\sum_{\nu=0}^{m}\binom{\nu+2n-1}{\nu}\leq 2^{m+2n}$ and number of $\tau_j$ is less than $m^n$. We conclude then
$$
\|P^{p}f\|_{L^{2}(\mathbb{R}^{n})}\leq   m^{-n}2^{-m} (4^{1+n/m}m^{n/m}C_{P}^{1/m}h)^{mp}M_{mp} \max_{(\boldsymbol{\alpha},\boldsymbol{\beta},\boldsymbol{\tau})\in\mathcal{C}_{p}} Q'_{\boldsymbol{\alpha},\boldsymbol{\beta},\boldsymbol{\tau}}(h),
$$
where 
$$
Q'_{\boldsymbol{\alpha},\boldsymbol{\beta},\boldsymbol{\tau}}(h)=\frac{h^{|\boldsymbol{\alpha}|+|\boldsymbol{\beta}|-2|\boldsymbol{\tau}|}M_{|\boldsymbol{\alpha}|+|\boldsymbol{\beta}|-2|\boldsymbol{\tau|}}}{h^{pm}M_{mp}}\prod_{j=1}^{p-1} \frac{(|\beta_{1}|+\dots +|\beta_{j}|-|\tau_1|-\dots-|\tau_{j-1}| )!}{(|\beta_{1}|+\dots +|\beta_{j}|-|\tau_1|-\dots--|\tau_{j}|)!}\:.$$

We  now estimate each of these terms.  In order to treat both the Roumieu and Beurling case simultaneously, we rewrite the assumption (\ref{romije}) as $M_k/M_{k+1}\leq r_{k}/\sqrt{k+1}$, where in the Roumieu case $r_{k}=r$ and in the Beurling case $r_k$ is a non-increasing positive sequence tending to 0. We obtain
\begin{align*}
h^{|\boldsymbol{\alpha}|+|\boldsymbol{\beta}|-2|\boldsymbol{\tau}|-pm}\frac{M_{|\boldsymbol{\alpha}|+|\boldsymbol{\beta}|-2|\boldsymbol{\tau|}}}{M_{mp}}
&
\leq \frac{M_{mp-2|\boldsymbol{\tau|}}}{h^{2|\boldsymbol{\tau|}}M_{mp}}\prod_{k=|\boldsymbol{\alpha}|+|\boldsymbol{\beta}|-2|\boldsymbol{\tau}|}^{mp-2|\boldsymbol{\tau}|-1}\frac{r_{k}}{h}
\\
&
\leq \left(\prod_{k=|\boldsymbol{\alpha}|+|\boldsymbol{\beta}|-2|\boldsymbol{\tau}|}^{mp-1}\frac{r_{k}}{h}\right)
\left(\prod_{\nu=mp-2|\boldsymbol{\tau}|}^{mp-1} \frac{1}{\sqrt{\nu+1}}\right)
\\
&
\leq \left(\prod_{k=1}^{mp-|\boldsymbol{\alpha}|-|\boldsymbol{\beta}|+2|\boldsymbol{\tau}|}\frac{r_{k}}{h}\right)
\left(\prod_{\nu=mp-2|\boldsymbol{\tau}|}^{mp-1} \frac{1}{\sqrt{\nu+1}}\right).
\end{align*}
In the Beurling case we have that the sequence $\prod_{k=1}^{j}(r_k/h)$ is bounded by some $C'_{h}$ because $r_{k}\to0$. In the Roumieu case this sequence is bounded by $C_h'=1$ if we ask $h\geq r$ (we impose this condition in the Roumieu case in the rest of the proof). Further on, clearly

$$
\frac{(|\beta_{1}|+\dots +|\beta_{j}|-|\tau_1|-\dots-|\tau_{j-1}| )!}{(|\beta_{1}|+\dots +|\beta_{j}|-|\tau_1|-\dots--|\tau_{j}|)!}\leq \frac{ (mj)!}{{(mj-|\tau_{j}|)!}}.
$$
Making use of $\tau_j\leq \alpha_{j+1}$ and $\sum_{k=1}^j\tau_i\leq \sum_{k=1}^j\beta_k$, 
$$
mp-2|\tau_{j+1}|-2|\tau_{j+2}|-\cdots-2|\tau_{p-1}|\geq jm, $$
and hence ($\tau_{p}:=0$)
$$\prod_{\nu=mp-2|\boldsymbol{\tau}|}^{mp-1}\frac{1}{\sqrt{\nu+1}}=\prod _{j=1}^{p-1}\ \prod^{mp-2|\tau_{j+1}|-\dots-2|\tau_{p-1}|-1}_{\nu=mp-2|\tau_{j}|-\dots-2|\tau_{p-1}|} \frac{1}{\sqrt{\nu+1}}\leq \prod _{j=1}^{p-1}\sqrt{\frac{(mj-2|\tau_{j}|)!}{(mj)!}}\: .
$$
Since for $j>2$
$$\frac{jm(jm-1)\cdots (jm-|\tau_{j}|+1)}{\sqrt{jm(jm-1)\dots(jm-2|\tau_{j}|+1)}}\leq \left(\frac{jm}{(j-2)m}\right)^m\leq3^{m}, $$
we obtain that $Q'_{\boldsymbol{\alpha},\boldsymbol{\beta},\boldsymbol{\tau}}(h)\leq 3^{mp} (2m^{3/2})^{m}C'_{h}/27 $. Summarizing, in the Beurling case we have shown that $\|\cdot \|_{P,Lh}\leq C_{h}\|\cdot \|_{h}$ for all $h>0$ where $L=4^{1+n/m}3m^{n/m}C_{P}^{1/m}$, while in the Roumieu case such inequality is valid for all $h\geq r$. This establishes the claimed inclusion and its tame continuity.
\end{proof}

Our next goal is to show that actually $\mathcal{S}^{\ast}(\mathbb{R}^{n})=\mathcal{S}^{\ast}_{P}(\mathbb{R}^{n})$ whenever $P$ is globally elliptic. Recall \cite{Rodino,Shubin} that global ellipticity means that the principal symbol
\begin{equation}\label{global}
\sum_{|\alpha|+|\beta|=m} c_{\alpha\beta} x^{\beta}\xi^{\alpha}\not=0 \quad \mbox{for all }  \  (x,\xi)\not=(0,0).
\end{equation}

Our starting point is the same as in \cite{Pilip}, i.e., the interpolating inequality \cite[Prop. 4.1]{Pilip}
\begin{equation}
\label{interpolatingeq}
|f|_{pm+j}\leq |f|_{pm}+C |f|_{(p+1)m}+ C^{pm+j} ((pm+j)!)^{1/2} \|f\|_{L^2(\mathbb R^d)}\:,
\end{equation}
where $0<j<m$ and $1\leq C$, for the Sobolev type seminorms
$$
|f|_s:=\sum_{|\alpha|+|\beta|=s}\|x^{\beta}\partial^{\alpha}f\|_{L^2(\mathbb R^n)}.
$$
We consider the family of norms
\begin{equation}
\label{ultranorms2}
\|f\|_{h}'=\sup_{p\in\mathbb{N}_{0}} \frac{|f|_{pm}}{h^{pm}M_{pm}}, \quad h>0.
\end{equation}
\begin{proposition}
\label{iterates proposition 2} Under the assumptions $(M.2)'$ and $(\ref{assumption})$, the family of norms $(\ref{ultranorms})$ and $(\ref{ultranorms2})$ are tamely equivalent $($both as $h\to\infty$ and $h\to0^{+}$$)$.
\end{proposition}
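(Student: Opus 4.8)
The plan is to prove, for a single dilation constant $L\ge 1$ and constants $C_h>0$ (allowed to depend on $h$), the two estimates $\|f\|'_{Lh}\le C_h\|f\|_h$ and $\|f\|_{Lh}\le C_h\|f\|'_h$, valid as $h\to\infty$ in the Roumieu case and for every $h>0$ in the Beurling case; together these give the asserted tame equivalence (and, in particular, the coincidence of the two spaces). The first estimate is elementary and needs no hypothesis on $M_p$: if $\|f\|_h\le 1$, then each summand in $|f|_{pm}=\sum_{|\alpha|+|\beta|=pm}\|x^{\beta}\partial^{\alpha}f\|_{L^2(\mathbb{R}^n)}$ is at most $h^{pm}M_{pm}$, and the number of pairs $(\alpha,\beta)\in\mathbb{N}_0^{n}\times\mathbb{N}_0^{n}$ with $|\alpha|+|\beta|=pm$ equals $\binom{pm+2n-1}{2n-1}\le 2^{pm+2n-1}$; hence $|f|_{pm}\le 2^{2n-1}(2h)^{pm}M_{pm}$, i.e. $\|f\|'_{2h}\le 2^{2n-1}\|f\|_h$.

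For the converse estimate the interpolating inequality $(\ref{interpolatingeq})$ is the decisive tool: it controls the ``missing'' seminorms $|f|_{pm+j}$ with $0<j<m$ by $|f|_{pm}$ and $|f|_{(p+1)m}$ (both governed directly by $\|\cdot\|'_h$) plus the error term $C^{pm+j}((pm+j)!)^{1/2}\|f\|_{L^2}$. Normalising $\|f\|'_h=1$ (so $\|f\|_{L^2}=|f|_0\le 1$ and $|f|_{pm}\le h^{pm}M_{pm}$), I would bound each of the three terms by $C_h(H^m h)^{pm+j}M_{pm+j}$ as follows. For $|f|_{pm}$: by $(M.1)$ the ratios $M_{k+1}/M_k$ are non-decreasing, so $M_{pm}/M_{pm+j}\le M_1^{-j}\le\max\{1,M_1^{-1}\}^{m}$, a constant. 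For $|f|_{(p+1)m}$: $(M.2)'$ applied over the at most $m$ indices between $pm+j$ and $(p+1)m$ costs a factor $\le (AH^{(p+1)m})^{m}=A^mH^{m^2}(H^m)^{pm}$, exponential in $pm$, which is exactly what forces the dilation $h\mapsto H^m h$. For the error term: assumption $(\ref{assumption})$ is tailored to absorb $((pm+j)!)^{1/2}$, giving $((pm+j)!)^{1/2}\le C_l\, l^{pm+j}M_{pm+j}$; in the Roumieu case $l$ is fixed and the resulting bound $\le C_l h^{pm+j}M_{pm+j}$ holds once $h\ge Cl$, while in the Beurling case one applies $(\ref{assumption})$ with $l=h/C$. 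Summing and reinstating the cases $j=0$, this gives $|f|_{s}\le C_h(H^m h)^{s}M_{s}$ for all $s\in\mathbb{N}_0$, hence $\|f\|_{H^m h}\le C_h$, i.e. $\|f\|_{H^m h}\le C_h\|f\|'_h$.

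Throughout one must keep track of the powers of $h$ so that the estimate stays tame rather than merely topological: the target exponent $pm+j$ differs from the natural exponents $pm$ and $(p+1)m$ of the individual terms by at most $m$, and this mismatch is absorbed into $C_h$ via $h^{pm}\le h^{pm+j}$ when $h\ge 1$ and $h^{pm}\le h^{-m}h^{pm+j}$ when $h<1$, which makes the argument uniform in the two regimes $h\to\infty$ and $h\to 0^{+}$. Taking $L=\max\{2,H^m\}$ in both displayed inequalities yields the tame equivalence. The main obstacle is not any single estimate but precisely this bookkeeping: all multiplicative constants ($2^{2n-1}$, $A^m$, $H^{m^2}$, $C$, $l$, $\max\{1,M_1^{-1}\}^m$, and so on) must be confined to the $h$-dependent factor $C_h$, and one must verify that the only genuine dilation of $h$ is the one dictated by the $(M.2)'$ gap of length $m$, so that the same $L$ works simultaneously for the Roumieu and Beurling classes.
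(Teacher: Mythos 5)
Your argument is correct and is essentially the paper's own proof: the easy direction by counting the multi-indices with $|\alpha|+|\beta|=pm$, and the converse by feeding the interpolating inequality \eqref{interpolatingeq} with the three terms handled exactly as you describe (the dilation $H^{m}$ coming from $(M.2)'$ and the factorial absorbed via \eqref{assumption}). The only remark worth making is that your bound $M_{pm}/M_{pm+j}\leq M_{1}^{-j}$ uses $(M.1)$, which is not listed among the proposition's hypotheses but is a standing assumption that the paper's own proof also invokes (through the constant $r$ of \eqref{romije}).
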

\begin{proof}
Clearly, $\| \cdot \|'_{h}\leq 2^{2n-1}\| \cdot \|_{h/2}$ without any assumption on $M_p$. In the Roumieu case, a routine computation with the aid of (\ref{interpolatingeq}) shows that $\| \cdot \|_{H^{m}h}\leq C'_{h}\| \cdot \|_{h}'$ for all $h\geq Cl$ with  $C'_h=C(hAH^{(m-1)/2})^{m}+C_{l}+\max\{1,(r/h)^{m}\}$, where these are the constants occurring in $(M.2)'$, (\ref{assumption}), (\ref{romije}), and (\ref{interpolatingeq}). In the Beurling case we obtain $\| \cdot \|_{H^{m}h}\leq C'_{h}\| \cdot \|_{h}'$ for all $h\leq1$ with $C'_h=C(AH^{(m-1)/2})^{m}+C_{h/C}+\max\{1,(r/h)^{m}\}$ where again $r$ is an upper bound for $\sqrt{p+1}M_p/M_{p+1}$.
\end{proof}

We need the ensuing adapted version of \cite[Prop. 4.2]{Pilip}. Set
$$\sigma_p(f,h)=\frac{|f|_{mp}}{h^{mp} M_{mp}}, \quad p\in\mathbb{N}_{0},$$
so that $\sigma_{0}(f,h)=\|f\|_{L^{2}(\mathbb{R}^{2})}$. We also set $\sigma_{-1}(f,h)=0$.

\begin{lemma}\label{iterates lemma 1} Let $P$ be globally elliptic and suppose that $(\ref{romije})$ holds. There is a constant $C'$ depending only on the operator and having the following properties:
\begin{itemize}
\item [(i)] In the Roumieu case there is $h_{0}>0$ (depending only on $P$ and the weight sequence) such that for all $h\geq h_{0}$
\begin{equation}
\label{3.6} 
\sigma_{p+1}(f,h)\leq\frac{C'M_{pm}}{h^{m}M_{(p+1)m}}\sigma_p(Pf,h)+\frac{1}{3}(\sigma_p(f,h)+\sigma_{p-1}(f,h)+\sigma_0(f,h)).
\end{equation}
\item [(ii)] In the Beurling case there is a positive non-increasing sequence $r_p$ tending to 0, which depends only on  $P$ and the weight sequence, such that
\begin{equation}
 \label{3.6'} 
\sigma_{p+1}(f,h)\leq\frac{C'M_{pm}}{h^{m}M_{(p+1)m}}\sigma_p(Pf,h)+\frac{r_{p}}{3h^{m}}\sigma_p(f,h)+\frac{r_{p}}{3h^{2m}}\sigma_{p-1}(f,h)+\sigma_0(f,h) \frac{r_1\cdots r_{p}}{3h^{m(p+1)}}.
\end{equation}
\end{itemize}
\end{lemma}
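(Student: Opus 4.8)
The plan is to derive the recursive estimate by combining global ellipticity with the interpolating inequality (\ref{interpolatingeq}), essentially following the strategy of \cite[Prop. 4.2]{Pilip} but tracking the dependence on $M_p$ carefully so that the hypotheses (\ref{romije}) can be invoked. The starting observation is that global ellipticity (\ref{global}) gives, by the standard a priori (subelliptic/elliptic) estimate for Shubin operators, a bound of the form $|f|_{(p+1)m}\le A_0\left(|Pf|_{pm}+|f|_{pm}\right)$ for a constant $A_0$ depending only on $P$; more precisely one controls the top-order Sobolev seminorm of $P^{p}$-type by $Pf$ plus lower order terms. The key point is that $P$ has order $m$, so $P$ maps the scale in steps of $m$, and applying the elliptic estimate to $f$ together with $|P(x^\beta\partial^\alpha f)|$ and commuting $P$ past $x^\beta\partial^\alpha$ (the commutators having order $<m$) yields $|f|_{(p+1)m}\le A_0|Pf|_{pm}+A_0'|f|_{pm}$, up to absorbing finitely many commutator terms of order $\le pm$ whose coefficients are polynomial of degree $\le m$.

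Next I would plug into (\ref{interpolatingeq}) to handle the non-multiples of $m$ if needed, but for the stated inequality it suffices to work at levels $pm$: divide the elliptic estimate $|f|_{(p+1)m}\le A_0|Pf|_{pm}+A_0'|f|_{pm}$ by $h^{(p+1)m}M_{(p+1)m}$. The first term becomes $A_0\dfrac{M_{pm}}{h^mM_{(p+1)m}}\sigma_p(Pf,h)$, which is the leading term in (\ref{3.6}) with $C'=A_0$. For the term $A_0'|f|_{pm}/(h^{(p+1)m}M_{(p+1)m})$, I write it as $\dfrac{A_0'}{h^m}\cdot\dfrac{M_{pm}}{M_{(p+1)m}}\cdot\sigma_p(f,h)$ and now invoke (\ref{romije}): in the Roumieu case $M_{pm}/M_{(p+1)m}\le (M_{pm}/M_{pm+1})\cdots(M_{(p+1)m-1}/M_{(p+1)m})$, each factor $\le r/\sqrt{\cdot}$, so the whole product is $\le r^m/\sqrt{(pm+1)\cdots((p+1)m)}\le r^m(pm+1)^{-m/2}$, which can be made $\le \tfrac13 h^m/A_0'$ for $h\ge h_0$ with $h_0$ depending only on $P$ and the sequence; this forces the coefficient of $\sigma_p(f,h)$ down to $\tfrac13$. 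The remaining lower-order commutator and interpolation remainders (the terms involving $|f|_{(p-1)m}$ and $\|f\|_{L^2}$ from (\ref{interpolatingeq}), the latter carrying the $((pm)!)^{1/2}$ factor) are absorbed analogously: one uses assumption (\ref{assumption}) in the form $\sqrt{(mp)!}\le C_l l^{mp}M_{mp}$ to convert the $((pm)!)^{1/2}\|f\|_{L^2}$ remainder into $C^{pm}\sigma_0(f,h)$ with the constant absorbed into $h$, and uses (\ref{romije}) again for the $\sigma_{p-1}$ term, yielding coefficient $\le\tfrac13$. In the Beurling version (ii) the very same computation goes through with $r$ replaced by the null sequence $r_p$ from the Beurling clause of (\ref{romije}): since $r_p\to0$, the factors $r_p/h^m$, $r_p/h^{2m}$, and the telescoping product $r_1\cdots r_p/h^{m(p+1)}$ arise naturally in place of the constants $\tfrac13$, with no restriction on $h$ needed.

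The main obstacle is the bookkeeping in the elliptic step: writing $P^{p+1}f$ or, equivalently, estimating $|f|_{(p+1)m}$ in terms of $|Pf|_{pm}$ requires commuting $P$ through the monomials $x^\beta\partial^\alpha$ that define $|\cdot|_{(p+1)m}$, and the commutators $[P,x^\beta\partial^\alpha]$ are differential operators of order at most $(p+1)m-1$ with polynomial coefficients of degree $\le m-1$, contributing terms that one must reorganize into pure $|\cdot|_{pm}$ and $|\cdot|_{(p-1)m}$ seminorms times constants $C^p$ — it is here that the constant $C'$ (and $h_0$) must be pinned down as depending only on $P$, not on $p$ or on $f$. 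Everything else is a routine division-and-estimate once (\ref{romije}) and (\ref{assumption}) are applied; the content of the lemma is precisely that global ellipticity lets the "bad" coefficient in front of $\sigma_p(f,h)$ be absorbed into a decaying factor ($\tfrac13$ or $r_p/h^m$), which is what makes the forthcoming iteration of (\ref{3.6})/(\ref{3.6'}) summable.
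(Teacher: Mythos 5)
Your overall strategy coincides with the paper's: start from a global a priori estimate for $P$, commute $P$ past the monomials $x^\beta D^\alpha$, divide by $h^{(p+1)m}M_{(p+1)m}$, and use (\ref{romije}) and (\ref{assumption}) to control the resulting coefficients. But there is a genuine gap at the central step. You assert an elliptic estimate $|f|_{(p+1)m}\le A_0|Pf|_{pm}+A_0'|f|_{pm}$ with constants independent of $p$, and later describe the commutator remainders as reorganizing into $|\cdot|_{pm}$ and $|\cdot|_{(p-1)m}$ seminorms ``times constants $C^p$''. Neither version is what the commutator computation yields, and the second would make your absorption step fail. The estimate the paper actually uses (imported from \cite[Prop.~4.2]{Pilip}) is
\[
|f|_{(p+1)m}\le C'|Pf|_{pm}+C_2\bigl((pm)^{m/2}|f|_{pm}+(pm)^m|f|_{(p-1)m}+C_3^p\,((p+1)m)!^{1/2}\,|f|_0\bigr),
\]
with coefficients growing \emph{polynomially} in $p$ on the two middle terms (from the binomial factors $\binom{\alpha}{\gamma}\le|\alpha|^{|\gamma|}\le(pm)^m$ arising in $[P,x^\beta D^\alpha]$ with $|\alpha|+|\beta|\le pm$) and exponentially only in front of $|f|_0$. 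Dividing by $h^mM_{(p+1)m}/M_{pm}$ and using (\ref{romije}) gains only the factor $\bigl((pm+1)\cdots(pm+m)\bigr)^{-1/2}\asymp(pm)^{-m/2}$; this can absorb $(pm)^{m/2}$ (and, after a $2m$-fold telescoping, $(pm)^m$ for the $\sigma_{p-1}$ term) but could never absorb a genuine $C^p$. The entire content of hypothesis (\ref{romije}) is that the $\sqrt{\cdot}$ gain matches this polynomial commutator growth exactly, while (\ref{assumption}) in the form $((p+1)m)!^{1/2}\le C_l\,l^{(p+1)m}M_{(p+1)m}$, combined with the factor $h^{-(p+1)m}$, handles the exponential $C_3^p$ in front of $|f|_0$ --- this is precisely where the product $r_1\cdots r_p/h^{m(p+1)}$ in (\ref{3.6'}) and the Roumieu choice $h_0=(3C_2C_3)^{1/m}r^2$ come from. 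Your write-up never verifies this matching, and in places posits coefficient growth under which it would break down.

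Two smaller points. To obtain the clean a priori bound $\sum_{s=0}^m|f|_s\le C_1\|Pf\|_{L^2}$ with no $f$-term on the right, the paper uses that $P:Q^m(\mathbb R^n)\to L^2(\mathbb R^n)$ is Fredholm with kernel a finite-dimensional subspace of $\mathcal S(\mathbb R^n)$ and reduces to $\ker P=\{0\}$; your version carries an extra $|f|_{pm}$ from the outset. Also, the interpolation inequality (\ref{interpolatingeq}) is not optional: the commutators produce seminorms $|f|_{pm+j}$ with $0<j<m$, and (\ref{interpolatingeq}) is exactly what converts them into $|f|_{pm}$, a multiple of $|f|_{(p+1)m}$ to be absorbed into the left-hand side, and the $((pm+j)!)^{1/2}\|f\|_{L^2}$ remainder; saying it ``suffices to work at levels $pm$'' skips the step that generates the $\sigma_0$ term in (\ref{3.6}) and (\ref{3.6'}).
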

\begin{proof} We closely follow the proof of \cite[Prop. 4.2]{Pilip} with the required modifications. First notice that   $P:Q^m(\mathbb R^n)\rightarrow L^2(\mathbb R^n)$ is Fredholm, where $Q^{m}(\mathbb{R}^{n})$ denotes the Sobolev type space consisting of functions with $\|u\|_{Q^{m}(\mathbb{R}^{n})}=\sum_{j=0}^{m}|u|_{j}<\infty$, and actually $\operatorname*{Ker} P$  is a finite dimensional subspace of the Schwartz space $\mathcal S(\mathbb R^n)$ \cite{Rodino}. We may therefore assume for the sake of simplicity that $\operatorname*{Ker} P=\{0\}$. Now, there is then a constant $C_{1}>0$ such that
\begin{equation}
\label{operatoreq}
\sum_{|\alpha|+|\beta|\leq m}  \|x^{\beta} D^{\alpha} f\|_{L^2(\mathbb R^n)}=\sum_{s=0}^m|f|_s\leq C_1 \|Pf\|_{L^2(\mathbb R^d)}.
\end{equation}
Estimating exactly as in the proof of \cite[Prop. 4.2]{Pilip} with the aid of commutators and (\ref{operatoreq}), 
\begin{equation*}
|f|_{(1+p)m}\leq C'|Pf|_{pm}+C_2( (pm)^{m/2}|f|_{pm}+ (pm)^m |f|_{(p-1)m}+ C_3^{p} ((p+1)m)!^{1/2} |f|_0),
\end{equation*}
where the constants depend only on the operator and we may assume they are $\geq1$. As in the proof of Proposition \ref{iterates proposition 1}, the condition (\ref{romije}) ensures the existence of a non-increasing sequence of positive numbers $r'_p$ such that 
$
\sqrt{p+1} M_p/M_{p+1}\leq r'_{p}$, $\forall p\in \mathbb{N}_{0},$
where in the Roumieu case we may take it to be constant $r'_p=r$ ($\geq 1$), while in the Beurling case $r'_{p}\to0^{+}$.
Hence, (\ref{3.6'}) holds with any non-increasing sequence $r_p$ majorizing the three sequences $ (3C_2)^{1/p}C_3b_p$, $3C_2b_{p}b_{p-1}$, and $3C_2b_{p}$, where $b_{p}=\prod_{\nu=pm}^{pm+m-1}r'_{\nu}$. In the Beurling case we can clearly choose $r_p\to 0^{+}$. In the Roumieu case (\ref{3.6}) holds if we select $h_{0}=(3C_2 C_3)^{1/m}r^{2}$.
\end{proof} 

We can now state and prove the main theorem of this section:

\begin{theorem}\label{iterates theorem 1} Let $P$ be globally elliptic and let $M_p$ satisfy $(M.1)$, $(M.2)'$, and $(\ref{assumption})$. We have that $\mathcal{S}^{\ast}_{P}(\mathbb{R}^{n})=\mathcal{S}^{\ast}(\mathbb{R}^{n})$ and they are tamely isomorphic.
\end{theorem}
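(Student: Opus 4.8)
The plan splits the equality into two inclusions. One of them is already available: Proposition~\ref{iterates proposition 1} gives $\mathcal{S}^{\ast}(\mathbb{R}^{n})\subseteq\mathcal{S}^{\ast}_{P}(\mathbb{R}^{n})$ with tamely continuous inclusion. So the task is to prove the reverse inclusion $\mathcal{S}^{\ast}_{P}(\mathbb{R}^{n})\subseteq\mathcal{S}^{\ast}(\mathbb{R}^{n})$ together with tame continuity of the corresponding map; the two facts combined make the identity a tame isomorphism. By Proposition~\ref{iterates proposition 2} it is enough to compare the norms $\|\cdot\|'_{h}$ of $(\ref{ultranorms2})$ with the iterate norms $\|\cdot\|_{P,h}$ of $(\ref{normsP})$. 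Concretely, with $\sigma_{p}(f,h)=|f|_{mp}/(h^{mp}M_{mp})$ as above, the goal is to produce an absolute constant $L>0$ (depending only on $P$) with $\sup_{p}\sigma_{p}(f,Lh)\leq C_{h}\|f\|_{P,h}$; in the Roumieu case $C_{h}$ will even be $1$, provided $h$ is above the threshold $h_{0}$ of Lemma~\ref{iterates lemma 1}.

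The driving tool is the recursive inequality of Lemma~\ref{iterates lemma 1}. Its right-hand side contains $\sigma_{p}(Pf,h)$, a Sobolev-type quantity of $Pf$ rather than an $L^{2}$ norm of an iterate, so it cannot be iterated on $f$ alone; instead one must propagate the whole two-parameter array $\sigma_{j}(P^{k}f,h)$, $j,k\in\mathbb{N}_{0}$. (By global ellipticity, $P^{k}f\in L^{2}$ for all $k$ already forces $f\in\mathcal{S}(\mathbb{R}^{n})$, so every entry of the array is finite.) Its bottom layer is controlled by hypothesis: $\sigma_{0}(P^{k}f,h)=\|P^{k}f\|_{L^{2}(\mathbb{R}^{n})}\leq h^{mk}M_{mk}\|f\|_{P,h}$. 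Feeding $g=P^{k}f$ into Lemma~\ref{iterates lemma 1} produces a recursion in which $j$ always decreases, $k$ being fixed on all terms except one, where $j$ drops by $1$ and $k$ rises by $1$. I would then prove, by induction on $j$, the a priori estimate
\[ \sigma_{j}(P^{k}f,h)\ \leq\ A^{jm}\,h^{km}\,\frac{M_{(j+k)m}}{M_{jm}}\,\|f\|_{P,h},\qquad j,k\in\mathbb{N}_{0}, \]
with $A^{m}:=C'+1$ in the Roumieu case. Inserting the inductive hypotheses into the four terms that Lemma~\ref{iterates lemma 1} generates, and dividing by the common factor $h^{km}M_{(j+k)m}/M_{jm}$, the entire step collapses to two elementary consequences of $(M.1)$ — that $\nu\mapsto M_{\nu+1}/M_{\nu}$ is non-decreasing, hence $M_{a}M_{b}\leq M_{a+b}$ — which force every remaining $M$-ratio to be $\leq 1$; what is left is $C'A^{(j-1)m}+\tfrac13 A^{(j-1)m}+\tfrac13 A^{(j-2)m}+\tfrac13\leq (C'+1)A^{(j-1)m}=A^{jm}$. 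Taking $k=0$ gives $\sigma_{p}(f,h)\leq A^{pm}\|f\|_{P,h}$, i.e. $\|f\|'_{Ah}\leq\|f\|_{P,h}$, and Proposition~\ref{iterates proposition 2} converts this into the sought tame bound on the $\|\cdot\|_{h}$ norms.

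In the Beurling case the same scheme runs, except that $A^{jm}$ must be replaced by a sequence $\phi(j)$ with $\phi(0)=1$ subject to the recursive inequality obtained from Lemma~\ref{iterates lemma 1}(ii) — now the three $\tfrac13$'s become $r_{j-1}/(3h^{m})$, $r_{j-1}/(3h^{2m})$ and $r_{1}\cdots r_{j-1}/(3h^{jm})$. Here I would use $r_{p}\to 0$: pick $p_{0}=p_{0}(h)$ with $r_{p}\leq 3h^{2m}$ for $p\geq p_{0}$; then for $j>p_{0}$ the first two coefficients are $\leq 1$ and the last term is $\leq C_{h}3^{j}$, so $\phi(j)=C_{h}L^{jm}$ works for any fixed $L$ with $L^{m}\geq C'+3$, the finitely many indices $j\leq p_{0}$ being absorbed into $C_{h}$. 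One again arrives at $\|f\|'_{Lh}\leq C_{h}\|f\|_{P,h}$, hence at tame continuity of $\mathcal{S}^{\ast}_{P}(\mathbb{R}^{n})\hookrightarrow\mathcal{S}^{\ast}(\mathbb{R}^{n})$; combined with Proposition~\ref{iterates proposition 1} this yields $\mathcal{S}^{\ast}_{P}(\mathbb{R}^{n})=\mathcal{S}^{\ast}(\mathbb{R}^{n})$ with tamely equivalent gradings, i.e. the asserted tame isomorphism.

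The step I expect to be the real obstacle is choosing the right weight $M_{(j+k)m}/M_{jm}$ in the a priori estimate. A naive ansatz such as $A^{jm}B^{km}M_{km}$ fails to close the induction: along the recursion $k$ can grow far beyond $j$, and then the $M$-ratio produced by the term in which $k$ increases points the wrong way; the weight $M_{(j+k)m}/M_{jm}$ is exactly what renders every auxiliary $M$-ratio harmless via $(M.1)$. A secondary, milder subtlety is the Beurling case, where the factors $h^{-m}$ and $h^{-2m}$ in the recursion threaten to make $L$ depend on $h$; that they do not is precisely the content of $r_{p}\to 0$.
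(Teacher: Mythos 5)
Your proposal is correct and follows essentially the same route as the paper: the forward inclusion from Proposition~\ref{iterates proposition 1}, the reduction to the norms $\|\cdot\|'_h$ via Proposition~\ref{iterates proposition 2}, and the reverse bound obtained by iterating Lemma~\ref{iterates lemma 1} with the same uses of $(M.1)$ and the same splitting at a threshold index in the Beurling case. The only difference is bookkeeping: you close a two-parameter induction on $\sigma_j(P^k f,h)$ with the weight $M_{(j+k)m}/M_{jm}$, whereas the paper unrolls the same recursion into the explicit binomial sum $\sigma_{p+1}(f,h)\leq\sum_{s}\binom{p}{s}(C')^{s}\|P^{s}f\|_{L^{2}}/(h^{sm}M_{sm})$ — these are the same estimate in two presentations (up to the exact value of the harmless absolute constant $L$).
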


\begin{proof} We start with the Beurling case. Since the sequence $r_{p}\searrow0$, we can find $p_h$ large enough such that (\ref{3.6}) holds for all $p\geq p_h$. We may assume that $r_1\geq1$. We keep $h\leq r_1$. For $p\leq p_h$, one gets from (\ref{3.6'})
$$
\sigma_{p}(f,h)\leq \frac{C'M_{(p-1)m}}{h^{m}M_{pm}}\sigma_{p-1}(Pf,h)+\frac{r_{1}}{3h^{m}}\sigma_{p-1}(f,h)+\frac{r_{1}}{3h^{2m}}\sigma_{p-2}(f,h)+\sigma_0(f,h) \frac{r_1^{p-1}}{3h^{mp}}.
$$
Iterating these two relations, one obtains 
\begin{align}
\label{in'}
\sigma_{p+1}(f,h)\leq &\frac{C_1}{h^{m}}\left(\sum_{q=p_h}^{p}\frac{M_{qm}}{M_{(q+1)m}}\sigma_q(Pf,h)+\sum_{q=0}^{p_h-1}\frac{C_1^{p_h-1-q}}{h^{(p_h-1-q)m}}\frac{M_{qm}}{M_{(q+1)m}}\sigma_q(Pf,h)\right)
\\
&\nonumber
\quad \quad
+\frac{C_1^{p_h}}{h^{p_hm}}\sigma_0(f,h),
\end{align}
where $C_1=\max{\{r_1,C'}\}$. Iterating once more, we have
\begin{equation}
\label{iterateeq1}
\sigma_{p+1}(f,h)\leq \frac{C_1^{p_h}}{h^{p_hm}}\sum_{s=0}^{p}{p\choose s}C_1^{s}\frac{\sigma_{0}(P^{s}f,h)}{h^{sm}M_{sm}},
\end{equation}
for $h\leq C_1$. In fact, we check the latter inequality inductively. The assumption $(M.1)$ yields 
$M_{qm}/M_{(q+1)m}\leq M_{sm}/M_{(s+1)m}$ if $s\leq q$.
By (\ref{in'}), (\ref{iterateeq1}) for $q\leq p$, and $h\leq C_1$
\begin{align*}
\sigma_{p+1}(f,h)&\leq  \frac{C_1^{p_h}}{h^{p_hm}}\left(\sum_{q=0}^{p}\frac{C_1M_{qm}}{h^{m}M_{(q+1)m}} \sigma_q(Pf,h)+\sigma_0(f,h))\right)
\\
&
=
\frac{C_1^{p_h}}{h^{p_hm}}\left(\sigma_0(f,h))+\sum_{q=0}^{p}\frac{M_{qm}}{M_{(q+1)m}}\sum_{s=0}^{q} {q\choose s}\frac{C_1^{s+1}}{h^{m(s+1)}}\frac{\sigma_0(P^{s+1}f,h)}{M_{sm}}\right)
\\
&
\leq
\frac{C_1^{p_h}}{h^{p_hm}}\left(\sigma_0(f,h))+\sum_{s=0}^{p} {p+1\choose s+1}\frac{C_1^{s+1}}{h^{m(s+1)}}\frac{\sigma_0(P^{s+1}f,h)}{M_{(s+1)m}}\right),
\end{align*}
which shows  (\ref{iterateeq1}). It now follows immediately from (\ref{iterateeq1}) that 
$ \|\cdot\|'_{hL} \leq C'_h \|\cdot \|_{P,h}$ for all $h\leq r_1$, where $C'_{h}=(h^{-m}C_1)^{p_h}$ and $L= (1+C_1)^{1/m}$. Combining this with Proposition \ref{iterates proposition 2}, we obtain that $\mathcal{S}^{(M_p)}_{P}(\mathbb{R}^{n})\subseteq \mathcal{S}^{(M_p)}(\mathbb{R}^{n})$ and the inclusion mapping $\mathcal{S}^{(M_p)}_{P}(\mathbb{R}^{n})\to \mathcal{S}^{(M_p)}(\mathbb{R}^{n})$ is tamely continuous. The rest was already shown in Proposition \ref{iterates proposition 1}, which completes the proof in the Beurling case.

The Roumieu case is simpler. We keep $h\geq h_0$, where $h_0$ is the constant occurring in part $(i)$ of Lemma \ref{iterates lemma 1}. Iterating (\ref{3.6}) in an analogous way as in the Beurling case, we obtain
$$
\sigma_{p+1}(f,h)\leq \sum_{s=0}^{p}{p\choose s}(C')^{s}\frac{\|P^{s}f\|_{L^{2}(\mathbb{R}^{n})}}{h^{sm}M_{sm}},
$$
which implies that $ \|\cdot\|'_{hL} \leq \|\cdot \|_{P,h}$ for all $h\geq h_0$, where $L= (1+C')^{1/m}$. The rest follows once again from Proposition \ref{iterates proposition 1} and Proposition \ref{iterates proposition 2}.
 \end{proof} 
 
 Theorem \ref{regularity result} is now an easy consequence of Theorem \ref{iterates theorem 1}. In fact, if $Pu=f\in\mathcal{S}^{\ast}(\mathbb{R}^{n})$, the standard result \cite{Rodino} yields membership to the Schwartz space, that is, $u\in\mathcal{S}(\mathbb{R}^{n})$. Since $\|u\|_{P,h}=\max\{\|u\|_{L^{2}(\mathbb{R}^{n})}, \|f\|_{P,h} \}$, we conclude $u\in\mathcal{S}^{\ast}_{P}(\mathbb{R}^{n})=\mathcal{S}^{\ast}(\mathbb{R}^{n})$. As a corollary, we recover a result first observed in \cite{capiello-gramchev-rodino}: \emph{If $P$ is globally elliptic then all its eigenfunctions belong to $\mathcal{S}^{\{(p!)^{1/2}\}}(\mathbb{R}^{n})=\mathcal{S}^{1/2}_{1/2}(\mathbb{R}^{n})$.} Actually, we can strengthen this result by adding a bound on the partial derivatives of the eigenfunctions, the ensuing result is a direct corollary of the tame isomorphism established in this section (and inspection in the constants occurring in the proofs of the results for the Roumieu case) .
\begin{corollary}
\label{iterates corollary 1} Let $P$ be globally elliptic. There are constants $L_1$ and $L_2$ depending merely on $P$ such that if $u$ is a solution to $Pu=\lambda u$, $\lambda\in\mathbb{C}$, then
\begin{itemize}
\item [$(i)$]$\|x^{\beta}\partial^{\alpha} u\|_{L^{2}(\mathbb{R}^{n})}\leq L_{1}^{|\alpha|+|\beta|} (\alpha!\beta!)^{1/2} \| u\|_{L^{2}(\mathbb{R}^{n})}$ if $\lambda=0$.
\item [$(ii)$] $\|x^{\beta}\partial^{\alpha} u\|_{L^{2}(\mathbb{R}^{n})}\leq L_2|\lambda|(L_1|\lambda|^{\frac{1}{m}})^{|\alpha|+|\beta|} (\alpha!\beta!)^{1/2} \| u\|_{L^{2}(\mathbb{R}^{n})}$ if $\lambda\neq 0$.
\end{itemize}
\end{corollary}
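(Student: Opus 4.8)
The plan is to extract Corollary \ref{iterates corollary 1} directly from the tame isomorphism $\mathcal{S}^{\{(p!)^{1/2}\}}_{P}(\mathbb{R}^{n})=\mathcal{S}^{1/2}_{1/2}(\mathbb{R}^{n})$ established in Theorem \ref{iterates theorem 1}, applied to the Gevrey weight sequence $M_p=(p!)^{1/2}$ (which satisfies $(M.1)$, $(M.2)'$, and the Roumieu case of $(\ref{assumption})$ with $l=C_l=1$), and to keep careful track of the numerical constants appearing in the Roumieu parts of Proposition \ref{iterates proposition 1}, Proposition \ref{iterates proposition 2}, and the iteration in the proof of Theorem \ref{iterates theorem 1}. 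First I would treat case $(i)$: if $Pu=0$, then $P^{p}u=0$ for all $p\ge 1$, so $\|u\|_{P,h}=\|u\|_{L^{2}(\mathbb{R}^{n})}$ for every $h$. The Roumieu estimates in the proofs of Theorem \ref{iterates theorem 1} and Proposition \ref{iterates proposition 2} give, for $h$ at least some $h_{0}$ depending only on $P$, an inequality of the form $\|u\|_{h'}\le C\|u\|_{P,h}$ with $h'=L h$ and $C,L$ depending only on $P$; unwinding the definition of $\|\cdot\|_{h'}$ and passing from $|u|_{s}$ to individual terms $\|x^{\beta}\partial^{\alpha}u\|_{L^{2}}$, together with $M_{|\alpha|+|\beta|}=\sqrt{(|\alpha|+|\beta|)!}$ and the elementary bound $(|\alpha|+|\beta|)!\le 2^{|\alpha|+|\beta|}\alpha!\,\beta!$, yields exactly $\|x^{\beta}\partial^{\alpha}u\|_{L^{2}}\le L_{1}^{|\alpha|+|\beta|}(\alpha!\beta!)^{1/2}\|u\|_{L^{2}}$ with $L_1$ absorbing $h'$, the factor $2^{1/2}$, and the dimensional constants $2^{2n-1}$ from Proposition \ref{iterates proposition 2}.

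For case $(ii)$, with $Pu=\lambda u$ and $\lambda\neq 0$, one has $P^{p}u=\lambda^{p}u$, hence $\|P^{p}u\|_{L^{2}}=|\lambda|^{p}\|u\|_{L^{2}}$ and
$$
\|u\|_{P,h}=\sup_{p\ge 0}\frac{|\lambda|^{p}\|u\|_{L^{2}}}{h^{mp}\sqrt{(mp)!}}\,.
$$
Using $\sqrt{(mp)!}\ge (mp/e)^{mp/2}\gtrsim c^{mp}(mp)^{mp/2}$, the supremum over $p$ of $(|\lambda|/h^{m})^{p}/\sqrt{(mp)!}$ is a quantity of the form $A(\lambda,h)$ that is bounded by $\exp(C|\lambda|^{2/m}/h^{2})$ up to constants depending only on $m$; choosing $h$ comparable to $|\lambda|^{1/m}$ (and at least $h_0$, which for large $|\lambda|$ is automatic, the small-$|\lambda|$ range being handled by the crude bound $\|u\|_{P,h_0}\le C|\lambda|\|u\|_{L^2}$ coming from $\|P u\|_{L^2}=|\lambda|\|u\|_{L^2}$ and $\|u\|_{L^2}$ itself) makes $\|u\|_{P,h}\le L_{2}|\lambda|\|u\|_{L^{2}}$. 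Feeding this into the tame inequality $\|u\|_{h'}\le C\|u\|_{P,h}$ with $h'=Lh\asymp|\lambda|^{1/m}$ and then splitting $|u|_{s}$ into individual monomials exactly as before produces the claimed bound $\|x^{\beta}\partial^{\alpha}u\|_{L^{2}}\le L_{2}|\lambda|\,(L_{1}|\lambda|^{1/m})^{|\alpha|+|\beta|}(\alpha!\beta!)^{1/2}\|u\|_{L^{2}}$.

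The only genuinely delicate point is bookkeeping: one must verify that every constant produced along the chain Proposition \ref{iterates proposition 1} $\to$ Lemma \ref{iterates lemma 1}$(i)$ $\to$ Theorem \ref{iterates theorem 1} (Roumieu part) $\to$ Proposition \ref{iterates proposition 2} depends \emph{only} on $P$ (through $m$, $n$, $C_P$, the Fredholm/elliptic constant $C_1$ of $(\ref{operatoreq})$, and the fixed interpolation constant $C$ in $(\ref{interpolatingeq})$) and not on the weight sequence — which is true here precisely because for $M_p=(p!)^{1/2}$ the constants $C_l$, $l$, $A$, $H$, $r$ entering those proofs are absolute. Once this is checked, $h_{0}$ and the base multiplicative constant $C$ in $\|u\|_{Lh}\le C\|u\|_{P,h}$ depend only on $P$, and the exponential base $L_1=L\cdot h_0\cdot(2e)^{1/2}\cdot(\text{dimensional factor})$ together with $L_2$ absorbing $C$ and the $\sup_p$ estimate depend only on $P$, as asserted. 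I do not anticipate any obstruction beyond this careful tracking of constants; the analytic content is entirely contained in the already-established results of this section.
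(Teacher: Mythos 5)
Your proposal follows exactly the route the paper indicates (its ``proof'' is the one-line remark that the corollary is a direct consequence of the tame isomorphism of Theorem \ref{iterates theorem 1} applied with $M_p=(p!)^{1/2}$, plus inspection of the Roumieu constants), so the approach is the same and the argument is essentially sound. Two minor points of bookkeeping: for multi-indices the correct elementary bound is $(|\alpha|+|\beta|)!\le 2^{|\alpha|+|\beta|}|\alpha|!\,|\beta|!\le (2n)^{|\alpha|+|\beta|}\alpha!\,\beta!$ rather than $2^{|\alpha|+|\beta|}\alpha!\beta!$ (harmless, since as you note $L_1$ absorbs dimensional factors); and your ``crude bound'' $\|u\|_{P,h_0}\le C|\lambda|\|u\|_{L^2}$ for small nonzero $|\lambda|$ cannot hold as written (the $p=0$ term already gives $\|u\|_{P,h_0}\ge\|u\|_{L^2}$) unless one also invokes that the nonzero eigenvalues of a globally elliptic $P$ are bounded away from $0$ (discrete spectrum accumulating only at infinity) --- the same fact that is needed to make part $(ii)$ consistent at $\alpha=\beta=0$.
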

\section{Eigenfunction expansions}
\label{section eigenexpansions ultradifferentiable functions}
We now study eigenfunction expansions of ultradifferentiable functions and ultradistributions. 

Through the rest of the article we assume that $P$ is globally elliptic and normal. As pointed out in the Introduction, these two conditions on $P$ guarantee the existence of an orthonormal bases of $L^{2}({\mathbb{R}^{n}})$ consisting of eigenfunctions of $P$. We fix such an orthonormal basis of eigenfunctions $\{u_j: j\in\mathbb N\}$. For each $j$, let $\lambda_{j}$ be the eigenvalue corresponding to $u_j$. Since $PP^{\ast}$ is positive and self-adjoint, and has order $2m$ and eigenvalues $|\lambda_{j}|^{2}$, the Weyl asymptotic formula yields
\begin{equation}\label{tilda} 
|\lambda_j|\sim B j^{\frac{m}{2n}},
\end{equation} 
where the constant $B$ depends on the symbol of $PP^{\ast}$, 
see \cite{boggiatto, Rodino, Shubin} for details. We introduce two (graded) sequence spaces suggested by the inequalities (\ref{vazno1}), that is, the (LB) space
$$
\Lambda^{\{M_p\}}_{n}=\{(a_j)_{j\in\mathbb{N}}\in \mathbb{C}^{\mathbb{N}}:\ \sup_{j\in\mathbb{N}}|a_j|e^{M( j^{\frac{1}{2n}}/h)}<\infty\ \mbox{ for some }h>0  \},
$$
and the Fr\'{e}chet space
$$
\Lambda^{(M_p)}_{n}=\{(a_j)_{j\in\mathbb{N}}\in \mathbb{C}^{\mathbb{N}}:\ \sup_{j\in\mathbb{N}}|a_j|e^{M( j^{\frac{1}{2n}}/h)}<\infty\ \mbox{ for every }h>0  \}.
$$
The concept of absolute Schauder bases for locally convex spaces is defined in \cite[p. 340]{meise-vogt}.
 
\begin{theorem}
\label{eigenexpansions theorem 1}
Let $P$ be normal and globally elliptic and let $M_p$ satisfy $(M.1)$, $(M.2)'$, and $(\ref{assumption})$. The mapping 
$$f\mapsto ((f,u_j)_{L^{2}(\mathbb{R}^{n})})_{j\in\mathbb{N}}$$ is a tame isomorphism from $\mathcal{S}^{\ast}(\mathbb{R}^{n})$ onto  $\Lambda^{\ast}_{n}$. Moreover, the set of eigenfunctions $\{u_{j}: j\in\mathbb{N}\}$ is an absolute Schauder basis for $\mathcal{S}^{\ast}(\mathbb{R}^{n})$.
\end{theorem}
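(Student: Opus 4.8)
The plan is to combine the structural result from Section~\ref{section iterates} (Theorem~\ref{iterates theorem 1}) with the spectral theory of the normal operator $P$, reducing everything to a statement about the iterate norms $\|P^{p}f\|_{L^{2}}$ and the eigenvalues $\lambda_{j}$. Since $\{u_{j}\}$ is an orthonormal basis of eigenfunctions, Parseval's identity gives, for $f=\sum_{j}a_{j}u_{j}$ with $a_{j}=(f,u_{j})_{L^{2}}$,
\begin{equation*}
\|P^{p}f\|_{L^{2}(\mathbb{R}^{n})}^{2}=\sum_{j=1}^{\infty}|\lambda_{j}|^{2p}|a_{j}|^{2},
\end{equation*}
provided $f$ lies in the domain of $P^{p}$; in particular $f\in\mathcal{S}^{\ast}_{P}(\mathbb{R}^{n})$ if and only if the right-hand side is finite for all $p$ and grows like $(h^{mp}M_{mp})^{2}$ (with $h$ fixed in the Roumieu case, $h$ arbitrary in the Beurling case). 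By Theorem~\ref{iterates theorem 1} this is the same as $f\in\mathcal{S}^{\ast}(\mathbb{R}^{n})$, and the grading norms $\|f\|_{P,h}$ are tamely equivalent to $\|f\|_{h}$.

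First I would establish the sequence-space identity: the condition ``$\sum_{j}|\lambda_{j}|^{2p}|a_{j}|^{2}\leq C h^{2mp}M_{mp}^{2}$ for all $p$'' is equivalent, up to an adjustment of $h$ absorbed tamely, to the scalar estimate $|a_{j}|\leq C' \exp(-M(|\lambda_{j}|^{1/m}/h'))$ for all $j$. One direction is immediate: if the sum is bounded by $Ch^{2mp}M_{mp}^{2}$ then each single term satisfies $|\lambda_{j}|^{p}|a_{j}|\leq C^{1/2} h^{mp}M_{mp}$; taking the infimum over $p$ (more precisely over multiples of $m$, using $(M.2)'$ to interpolate to all integers) and recalling that $\inf_{p}t^{p}/M_{p}=e^{-M(t)}$ gives $|a_{j}|\leq C^{1/2}e^{-M(|\lambda_{j}|^{1/m}/h_{1})}$ for a suitable $h_{1}$ comparable to $h$. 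Conversely, if $|a_{j}|\leq C'e^{-M(|\lambda_{j}|^{1/m}/h')}$, then $|\lambda_{j}|^{p}|a_{j}|\leq C'(h')^{mp}M_{mp}$ pointwise, and summability of $\sum_{j}|\lambda_{j}|^{2p}|a_{j}|^{2}$ with a slightly larger constant follows by splitting off a factor $e^{-M(|\lambda_{j}|^{1/m}/h'')}$ and using that $\sum_{j}e^{-2M(|\lambda_{j}|^{1/m}/h'')}<\infty$ for every $h''$, which holds because $M(t)$ grows faster than any power of $\log t$ and, crucially, the essential assumption~(\ref{assumption}) forces $M(t)\geq c\,t^{2}$ for some $c>0$ (equivalently $\mathcal{S}^{1/2}_{1/2}\subseteq\mathcal{S}^{\ast}$), so by~(\ref{tilda}) the exponent behaves at least like $j^{1/n}$, making the series convergent with room to spare. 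Finally, (\ref{tilda}) lets me replace $|\lambda_{j}|^{1/m}$ by $j^{1/(2n)}$ at the cost of adjusting $h$ by the constant $B^{1/m}$, again a tame modification, turning the estimate into exactly~(\ref{vazno1}) and identifying the target space as $\Lambda^{\ast}_{n}$.

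Next I would assemble the tame isomorphism. The composite map $f\mapsto((f,u_{j})_{L^{2}})_{j}$ factors as $\mathcal{S}^{\ast}(\mathbb{R}^{n})=\mathcal{S}^{\ast}_{P}(\mathbb{R}^{n})\to\Lambda^{\ast}_{n}$; it is injective because $\{u_{j}\}$ is total in $L^{2}$, and surjective because given $(a_{j})\in\Lambda^{\ast}_{n}$ the series $\sum_{j}a_{j}u_{j}$ converges in $L^{2}$ and, by the estimate just derived together with Parseval, defines an element of $\mathcal{S}^{\ast}_{P}(\mathbb{R}^{n})$. Tameness in both directions is read off from the two-sided comparison of $\|f\|_{P,h}$ with $\sup_{j}|a_{j}|e^{M(j^{1/(2n)}/(Lh))}$ established above, composed with the tame equivalences of Theorem~\ref{iterates theorem 1} and Proposition~\ref{iterates proposition 2}. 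For the absolute Schauder basis statement, note that under this isomorphism the eigenfunction $u_{k}$ corresponds to the canonical unit sequence $e_{k}\in\Lambda^{\ast}_{n}$, and the partial-sum operators correspond to coordinate truncations; the expansion $f=\sum_{j}a_{j}u_{j}$ converges in $\mathcal{S}^{\ast}(\mathbb{R}^{n})$ because the tail $\sum_{j>N}|a_{j}|e^{M(j^{1/(2n)}/h)}\to 0$ (the defining supremum is attained with terms going to zero), and it is \emph{absolute} convergence in the locally convex sense precisely because each grading seminorm of $\sum_{j}a_{j}u_{j}$ is dominated by $\sum_{j}|a_{j}|\|u_{j}\|_{h}$, which is finite after shifting $h$ using Corollary~\ref{iterates corollary 1} to control $\|u_{j}\|_{h}$ by $e^{M(c j^{1/(2n)}/h)}$-type bounds — this is what the sequence-space model makes transparent. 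The main obstacle I anticipate is the careful bookkeeping of how the constant $h$ transforms under each step (Weyl asymptotics, the $\inf_{p}$ passing from multiples of $m$ to all $p$ via $(M.2)'$, and the convergence-producing factor split from the exponential) so that every comparison remains \emph{tame}, i.e. of the form $h\mapsto Lh$ with a single uniform $L$; the analytic content is routine once the estimate $M(t)\gtrsim t^{2}$ coming from~(\ref{assumption}) is in hand.
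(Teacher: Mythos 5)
Your overall strategy coincides with the paper's: reduce to the iterate norms $\|\cdot\|_{P,h}$ via Theorem \ref{iterates theorem 1}, convert $\|P^{p}f\|_{L^{2}}^{2}=\sum_{j}|\lambda_{j}|^{2p}|a_{j}|^{2}$ by Parseval and normality, and use the Weyl asymptotics (\ref{tilda}) to pass from $|\lambda_{j}|^{1/m}$ to $j^{1/(2n)}$. However, there are concrete errors in the details. First, your ``crucial'' claim that assumption (\ref{assumption}) forces $M(t)\geq c\,t^{2}$ is backwards: (\ref{assumption}) says $M_{p}\geq \sqrt{p!}/(C_{l}l^{p})$, i.e.\ the weight sequence dominates $\sqrt{p!}$, and larger weight sequences have \emph{smaller} associated functions, so what follows is $M(t)\leq \log C_{l}+c(lt)^{2}$ --- the reverse inequality. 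The convergence of $\sum_{j}e^{-2M(j^{1/(2n)}/h)}$ is still true, but for the elementary reason that $e^{-M(t)}\leq M_{p}t^{-p}$ for every fixed $p$ (take $p=4n$); no power-type lower bound on $M$ is available or needed. Second, the ``splitting off a factor $e^{-M}$'' step in the converse direction is where the real work lies, and as sketched it is at risk of requiring $(M.2)$: if you use one copy of $e^{-M(t_{j})}$ for summability and bound $|\lambda_{j}|^{2p}$ by the remaining single copy, you land on $M_{2mp}$, and $M_{2mp}\lesssim h^{2mp}M_{mp}^{2}$ is exactly $(M.2)$, which the paper deliberately does \emph{not} assume. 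The correct device, used in the paper, is Komatsu's Proposition 3.4 under $(M.2)'$ alone: $M(t)+2n\log t\leq M(H^{2n}t)+C$, equivalently $e^{-M(H^{2n}t)}\leq C t^{-2n}e^{-M(t)}$, which converts a fixed tame shift of $h$ into the polynomial factor $j^{-1}$ needed for $\ell^{2}$-summability while keeping the full exponential decay. You should make this mechanism explicit, since the whole point of the comparison between the $\sup$-norms defining $\Lambda^{\ast}_{n}$ and the $\ell^{2}$-norms coming from Parseval is that it be tame and use only $(M.2)'$.

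A secondary flaw is in the absolute-basis argument: controlling $\sum_{j}|a_{j}|\,\|u_{j}\|_{h}$ in the \emph{original} grading norms via Corollary \ref{iterates corollary 1} does not work, because those bounds only give $u_{j}\in\mathcal{S}^{\{M_p\},h_{j}}_{L^{2}}$ with $h_{j}\asymp|\lambda_{j}|^{1/m}\asymp j^{1/(2n)}$; for a fixed $h$ one has $\|u_{j}\|_{h}=\infty$ once $j$ is large (the supremum $\sup_{k}(L_{1}l|\lambda_{j}|^{1/m}/h)^{k}$ diverges), so the series $\sum_{j}|a_{j}|\|u_{j}\|_{h}$ is not even well defined. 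The fix is to argue in the $P$-iterate norms, where $\|u_{j}\|_{P,h}=\sup_{p}|\lambda_{j}|^{mp}/(h^{mp}M_{mp})=e^{\widetilde{M}(|\lambda_{j}|^{1/m}/h)}$ is finite for every $h$ and of exactly the form your computation needs, and then transfer back through the tame isomorphism of Theorem \ref{iterates theorem 1} --- or, as the paper does, simply read off the absolute basis property from the sequence-space isomorphism, where the unit vectors form an absolute basis of $\Lambda^{\ast}_{n}$ by a standard K\"othe-space argument.
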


\begin{proof} That $\{u_{j}:\: j\in\mathbb{N}_{0}\}$ is an absolute Schauder basis of $\mathcal{S}^{\ast}(\mathbb{R}^{n})$ follows readily from the first assertion and the fact that it is an orthonormal basis of $L^{2}(\mathbb{R}^{n})$, we leave details to the reader. Because of Theorem \ref{iterates theorem 1}, we can work with the system of norms (\ref{normsP}). Define the function
\begin{equation*}
\widetilde{M}(t):=\sup_{p\in\mathbb{N}_0}\log\frac{t^{mp}}{M_{mp}},\quad t> 0,
\end{equation*}
and notice that $\widetilde{M}(t)\leq M(t)$ and $M(t)\leq \widetilde{M}(H^{m}t)+\log (A^{m}H^{\frac{(m+2)(m-1)}{2}}) $, as one readily verifies with the aid of $(M.2)'$. Thus, using $M$ for the definition of $\Lambda^{\ast}_{n}$ is tamely equivalent to using the function $\widetilde{M}$. Furthermore, the system of norms $\|(a_j)_{j}\|_{\infty,h}:=\sup_{j\in\mathbb{N}}|a_j|e^{\widetilde{M}( j^{\frac{1}{2n}}/h)}$ for $\Lambda^{\ast}_{n}$ is tamely equivalent to $\|(a_j)_{j}\|_{2,h}:=\|(a_j e^{\widetilde{M}( j^{\frac{1}{2n}}/h)})_{j}\|_{\ell^{2}(\mathbb{N})}$. In fact, we trivially have $\|(a_j)_{j}\|_{\infty,h}\leq \|(a_j)_{j}\|_{2,h}$ for all $h>0$. On the other hand, the sequence $M_{mp}$ satisfies $M_{(p+1)m}\leq (AH^{\frac{m+1}{2}})^{m}H^{pm^{2}} M_{pm}$, and applying \cite[Prop. 3.4, p. 50] {Komatsu} to $M_{pm}$, we obtain 
$$
e^{\widetilde{M}(t)}\leq A^{2n}H^{n(m+1)}\: \frac{e^{\widetilde{M}(H^{2n}t)}}{t^{2n}}, \quad t>0.
$$
The latter inequality implies that $\|(a_j)_{j}\|_{2,h}\leq \|(a_j)_{j}\|_{\infty,H^{-2n}h}(AhH^{\frac{m+1}{2}})^{2n} \pi/\sqrt{6}$ for all $h>0$, showing the claimed tame equivalence. Write now $a_{j}=(f,u_j)_{L^{2}(\mathbb{R}^{n})}$ and let $d=\operatorname*{dim}(\operatorname*{Ker} P)$. Employing the Weyl asymptotics (\ref{tilda}), we have
\begin{equation*}
B_1^{2}\|P^p f\|^2_{L^2(\mathbb R^n)}\leq \sum_{j=1}^{\infty} j^{\frac{mp}{n}} |a_j|^2\leq d^{\frac{mp}{n}}\|f\|^{2}_{L^{2}(\mathbb{R}^{2})}+B_2^{2} \|P^p f\|^2_{L^2(\mathbb R^n)},
\end{equation*}
whence $B_{1}\|f\|_{P,h}\leq \|(a_{j})_{j}\|_{2,h}$ and $\|(a_{j})_{j}\|_{\infty,h}\leq \|f\|_{P,h}\sqrt{B^{2}_{2}+e^{2\widetilde{M}(d^{\frac{1}{2n}}/h)}}$ for all $h>0$. This concludes the proof of the theorem.
\end{proof}
Observe that if $(M.2)'$ holds, the strong duals of $\Lambda^{\ast}_{n}$ are precisely
$$
(\Lambda^{\{M_p\}}_{n})'=\{(a_j)_{j\in\mathbb{N}}\in \mathbb{C}^{\mathbb{N}}:\ \sup_{j\in\mathbb{N}}|a_j|e^{-M( j^{\frac{1}{2n}}/h)}<\infty\ \mbox{ for all }h>0  \},
$$
and
$$
(\Lambda^{(M_p)}_{n})'=\{(a_j)_{j\in\mathbb{N}}\in \mathbb{C}^{\mathbb{N}}:\ \sup_{j\in\mathbb{N}}|a_j|e^{-M( j^{\frac{1}{2n}}/h)}<\infty\ \mbox{ for some }h>0  \}.
$$
Therefore, we obtain the following corollary from Theorem \ref{eigenexpansions theorem 1} for ultradistributions. Note that the ultradistributional evaluation $\langle f,\overline{u}_{j} \rangle={}_{\mathcal{S^{\ast}}'}\langle f,\overline{u}_{j} \rangle_{\mathcal{S^{\ast}}}$ is well-defined in view of Corollary \ref{iterates corollary 1}.
\begin{corollary}
\label{eigenexpansions corollary 1}
Under the assumptions of Theorem \ref{eigenexpansions theorem 1}, every ultradistribution $f\in{\mathcal{S}^{\ast}}'(\mathbb{R}^{n})$ has eigenfunction expansion
$$
f=\sum_{j=1}^{\infty}a_j u_j, \quad \quad a_j=\langle f,\overline{u}_{j} \rangle.
$$
Furthermore, $\{u_j: j\in\mathbb{N}\}$ is an absolute Schauder basis for ${\mathcal{S}^{\ast}}'(\mathbb{R}^{n})$ and the mapping $f\mapsto (a_j)_{j\in\mathbb{N}}$ is a tame isomorphism from ${\mathcal{S}^{\ast}}'(\mathbb{R}^{n})$ onto ${\Lambda^{\ast}_{n}}'$.
\end{corollary}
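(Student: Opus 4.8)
The plan is to derive Corollary \ref{eigenexpansions corollary 1} directly from Theorem \ref{eigenexpansions theorem 1} by dualizing the established tame isomorphism. First I would record the two descriptions of the strong duals $(\Lambda^{\{M_p\}}_{n})'$ and $(\Lambda^{(M_p)}_{n})'$ given just before the corollary; these follow from the standard duality between $(LB)$ and $(FS)$ sequence spaces of K\"othe type, using that under $(M.2)'$ the defining weights $e^{M(j^{1/2n}/h)}$ form a regular K\"othe matrix, so the duals are computed pointwise and one passes from ``for some $h$'' to ``for all $h$'' and vice versa. The pairing between $(a_j)_j\in\Lambda^{\ast}_n$ and $(b_j)_j\in{\Lambda^{\ast}_n}'$ is of course $\sum_j a_j b_j$, which converges absolutely by the Cauchy--Schwarz-type argument splitting $a_j b_j = (a_j e^{M(j^{1/2n}/h)})(b_j e^{-M(j^{1/2n}/h)})$ for an appropriate $h$.

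Next I would transport this along the isomorphism of Theorem \ref{eigenexpansions theorem 1}. Since that theorem gives a \emph{tame} topological isomorphism $\mathcal{S}^{\ast}(\mathbb{R}^{n})\cong\Lambda^{\ast}_n$ via $f\mapsto ((f,u_j)_{L^2})_j$, its transpose is a (tame) topological isomorphism ${\mathcal{S}^{\ast}}'(\mathbb{R}^{n})\cong{\Lambda^{\ast}_n}'$. Here I should be slightly careful about conjugates: the $L^2$ inner product is sesquilinear, so the natural real bilinear pairing of $f\in{\mathcal{S}^{\ast}}'$ against $\varphi\in\mathcal{S}^{\ast}$ matches $\sum_j a_j c_j$ where $a_j=\langle f,\overline{u}_j\rangle$ and $\varphi=\sum_j \overline{c_j}\,u_j$, i.e. $c_j=(\varphi,u_j)_{L^2}$. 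Unwinding this identification shows the coefficient sequence of $f$ is exactly $(a_j)_j=(\langle f,\overline{u}_j\rangle)_j$ and that it lies in ${\Lambda^{\ast}_n}'$. Tameness of the transpose follows from tameness of the original map together with the explicit inequalities between the $\|\cdot\|_{\infty,h}$ and $\|\cdot\|_{2,h}$ systems recorded in the proof of Theorem \ref{eigenexpansions theorem 1}; no new estimate is needed.

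Finally I would establish the expansion $f=\sum_j a_j u_j$ in ${\mathcal{S}^{\ast}}'(\mathbb{R}^{n})$ and the absolute Schauder basis property. For the expansion: for every test function $\varphi=\sum_j\overline{c_j}u_j\in\mathcal{S}^{\ast}(\mathbb{R}^{n})$ one has $\langle f,\varphi\rangle=\sum_j a_j c_j=\sum_j \langle a_j u_j,\varphi\rangle$, the interchange being justified by absolute convergence as above; this is precisely convergence of $\sum_j a_j u_j$ to $f$ in the weak-$\ast$ topology, hence (the spaces being reflexive / the series being unconditionally weakly convergent) in the strong topology as well. That $\{u_j\}$ is an absolute Schauder basis of ${\mathcal{S}^{\ast}}'(\mathbb{R}^{n})$ then transfers from the fact that the unit vectors $e_j$ form an absolute basis of the K\"othe space ${\Lambda^{\ast}_n}'$, via the isomorphism just constructed, exactly as in the $\mathcal{S}^{\ast}$ case. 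The only mildly delicate point — and the one I would flag as the main obstacle — is the bookkeeping with complex conjugates in the sesquilinear $L^2$ pairing versus the bilinear ultradistributional pairing, so that the coefficients come out as $\langle f,\overline{u}_j\rangle$ and not $\langle f,u_j\rangle$; everything else is a formal consequence of Theorem \ref{eigenexpansions theorem 1} and standard K\"othe-space duality.
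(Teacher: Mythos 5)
Your proposal is correct and follows exactly the route the paper intends: the paper offers no separate proof, merely noting the sequence-space descriptions of the strong duals and declaring the corollary a consequence of Theorem \ref{eigenexpansions theorem 1} by transposing the tame isomorphism. Your filled-in details (K\"othe duality, the conjugation bookkeeping giving $a_j=\langle f,\overline{u}_j\rangle$, and weak-$\ast$ convergence upgrading to strong convergence via the Montel property of (FS)/(DFS) spaces) are all sound and consistent with the paper's remark that the evaluations $\langle f,\overline{u}_j\rangle$ are well-defined by Corollary \ref{iterates corollary 1}.
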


We end this article with a specialized version of Corollary \ref{iterates corollary 1}. We mention that one may also deduce Theorem \ref{eigenexpansions theorem 1} from these bounds on the derivatives of the eigenfunctions, but we omit details for the sake of brevity. 

\begin{corollary}
\label{eigenexpansions corollary 1} Let $P$ be normal and globally elliptic. Then, there is a constant $\ell=\ell_{P}$ such that 
$$\|x^{\beta}\partial^{\alpha} u\|_{L^{2}(\mathbb{R}^{n})}\leq j^{\frac{m+|\alpha|+|\beta|}{2n}}\ell^{|\alpha|+|\beta|} (\alpha!\beta!)^{1/2} \| u\|_{L^{2}(\mathbb{R}^{n})},
$$
for each eigenfunction u with $Pu=\lambda_{j}u$.
\end{corollary}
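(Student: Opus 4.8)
The plan is to deduce this statement directly from Corollary \ref{iterates corollary 1} by inserting the Weyl asymptotics. The one ingredient not yet exploited is the relation (\ref{tilda}): since $|\lambda_{j}|/(Bj^{\frac{m}{2n}})\to 1$, there is a constant $B'>0$ depending only on the symbol of $PP^{\ast}$ (hence only on $P$) with $|\lambda_{j}|\leq B'j^{\frac{m}{2n}}$ for every $j\in\mathbb{N}$; note that this uniform bound also covers the finitely many indices for which $\lambda_{j}=0$.

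First I would handle the case $\lambda_{j}=0$. Then $u$ lies in the kernel of $P$ and Corollary \ref{iterates corollary 1}$(i)$ gives $\|x^{\beta}\partial^{\alpha}u\|_{L^{2}}\leq L_{1}^{|\alpha|+|\beta|}(\alpha!\beta!)^{1/2}\|u\|_{L^{2}}$; since $j\geq 1$ makes $j^{\frac{m+|\alpha|+|\beta|}{2n}}\geq 1$, the claimed inequality follows as soon as $\ell\geq L_{1}$. Next, for $\lambda_{j}\neq 0$, I would apply Corollary \ref{iterates corollary 1}$(ii)$ and substitute the bounds $|\lambda_{j}|\leq B'j^{\frac{m}{2n}}$ and $|\lambda_{j}|^{1/m}\leq (B')^{1/m}j^{\frac{1}{2n}}$, using that $t\mapsto(L_{1}t)^{|\alpha|+|\beta|}$ is nondecreasing on $(0,\infty)$; this turns the estimate into $\|x^{\beta}\partial^{\alpha}u\|_{L^{2}}\leq L_{2}B'\bigl(L_{1}(B')^{1/m}\bigr)^{|\alpha|+|\beta|}\, j^{\frac{m+|\alpha|+|\beta|}{2n}}(\alpha!\beta!)^{1/2}\|u\|_{L^{2}}$, where the $j$-power has come out exactly as desired.

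Finally I would absorb the remaining constant into $\ell^{|\alpha|+|\beta|}$. For $\alpha=\beta=0$ the right-hand side of the claim equals $j^{\frac{m}{2n}}\|u\|_{L^{2}}\geq\|u\|_{L^{2}}$ (again because $j\geq 1$), so nothing is needed; for $|\alpha|+|\beta|\geq 1$ it suffices that $L_{2}B'(L_{1}(B')^{1/m})^{|\alpha|+|\beta|}\leq\ell^{|\alpha|+|\beta|}$, which holds once $\ell\geq L_{1}(B')^{1/m}\max\{1,L_{2}B'\}$. Taking $\ell_{P}=\max\{L_{1},\,L_{1}(B')^{1/m}\max\{1,L_{2}B'\}\}$, which depends only on $P$, then settles both cases. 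I do not expect any real obstacle here; the only small subtleties are that (\ref{tilda}) is merely an asymptotic equivalence and must first be turned into the uniform bound $|\lambda_{j}|\leq B'j^{\frac{m}{2n}}$, and that the spurious prefactor $L_{2}|\lambda_{j}|$ appearing in Corollary \ref{iterates corollary 1}$(ii)$ has to be swallowed, which the factor $j^{\frac{m}{2n}}\geq 1$ together with the freedom in the choice of $\ell$ comfortably permits.
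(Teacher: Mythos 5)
Your proposal is correct and follows exactly the route the paper takes: its entire proof is ``Apply Corollary \ref{iterates corollary 1} and the asymptotic estimate (\ref{tilda}),'' and you have simply filled in the routine details (turning the asymptotic equivalence into a uniform bound $|\lambda_j|\leq B' j^{m/(2n)}$, splitting the cases $\lambda_j=0$ and $\lambda_j\neq 0$, and absorbing the constants into $\ell^{|\alpha|+|\beta|}$). No issues.
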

\begin{proof} Apply Corollary \ref{iterates corollary 1} and the asymptotic estimate (\ref{tilda}). 
\end{proof}

\smallskip

\end{document}